\numberwithin{equation}{section}
\providecommand{\norm}[1]{\lVert#1\rVert}
\DeclareMathOperator*{\tr}{tr}
\DeclareMathOperator*{\interior}{int}
\DeclareMathOperator*{\cl}{cl}
\DeclareMathOperator*{\conc}{conc}
\newtheorem{theorem}{Theorem}[section]
\newtheorem{definition}{Definition}[section]
\newtheorem{remark}{Remark}[section]
\newtheorem{assumption}{Assumption}[section]
\newtheorem{proposition}{Proposition}[section]
\newtheorem{lemma}{Lemma}[section]
\title[]{ Martingale optimal transport with stopping} \thanks{We would like to thank Sebastian Hermann, Sigrid K\"{a}llblad and Florian Stebegg for helpful discussions.}
\author[]{Erhan Bayraktar} \thanks{This research was supported in part by the National Science Foundation under grant DMS-1613170.}  
\address{Department of Mathematics, University of Michigan, U.S.A.}
\email{erhan@umich.edu}
\author[]{Alexander M. G. Cox} 
\address{Department of Mathematical Sciences, University of Bath, U.K.}
\email{a.m.g.cox@bath.ac.uk}
\author[]{Yavor Stoev}
\address{Department of Mathematics, University of Michigan, U.S.A.}
\email{ystoev@umich.edu}
\date{\today}
\begin{document}
\keywords{martingale optimal transport, dynamic programming, optimal stopping, stochastic Perron method, viscosity solutions, concave envelope, distribution constraints}
\subjclass[2010]{60G40, 93E20, 91A10, 91A60, 60G07.}
\maketitle

\begin{abstract}
  We solve the martingale optimal transport problem for cost functionals represented
  by optimal stopping problems. The measure-valued martingale approach developed
  in \cite{CoxKal} allows us to obtain an equivalent infinite-dimensional controller-stopper problem.
  We use the stochastic Perron's method and characterize the finite dimensional approximation
  as a viscosity solution to the corresponding HJB equation. It turns out that this
  solution is the concave envelope of the cost function with respect to the atoms
  of the terminal law. We demonstrate the results by
  finding explicit solutions for a class of cost functions.
\end{abstract}

\section{Introduction}
The aim of this article is to solve a class of martingale optimal transport problems
for which the cost functional can be represented as an optimal stopping problem of the
underlying cost function.
Specifically, given a continuous and bounded cost function $f:\nobreak\mathbb{R}\to\mathbb{R}$ we are interested in solving
the martingale optimal transport problem
\begin{align}\label{origprob}
  &\sup_{P_\mu}P^\mathbb{P}(f)\quad\text{with}\quad P^\mathbb{P}(f)=\sup_{\tau\in\mathcal{T}_0} \mathbb{E}[f(M_\tau)].
\end{align}
The outer supremum is taken over $P_\mu$ - the set of all pairs of filtered probability spaces
\break$(\Omega,\mathcal{F},(\mathcal{F}_t)_{t\ge0},\mathbb{P})$ and continuous martingales
$M=(M_t)_{t\ge0}$ on them such that the filtration $(\mathcal{F}_t)_{t\ge0}$ is
generated by a Brownian motion and the terminal law is $M_T\sim\mu$ under $\mathbb{P}$.
The inner stopping problem is over $\mathcal{T}_s$ - the set of all $(\mathcal{F}_{t})$-stopping times taking
values in $[s,T]$ for $s\in[0,T]$ and some fixed terminal time $T>0$.

The duality between martingale optimal transport and robust
pricing problems was studied in a related setting in Dolinsky and Soner \cite{DolSon} for general path-dependent
European-type cost functionals (i.e. payoffs) and continuous models.
Recently Bayraktar and Miller \cite{BayMil} and Beiglb\"ock et al. \cite{Bei1} obtained 
solutions to distribution-constrained optimal stopping problems
by using dynamic programming and martingale transport methods, respectively. In contrast
to our setting, however, the constraints in \cite{BayMil} and \cite{Bei1} are on the
distribution of the stopping times and not on the marginal distribution at the terminal time.
By using the concept of measure-valued martingales Cox and Kallbl\"ad \cite{CoxKal} studied
the robust pricing of Asian-type options subject to a marginal distribution constraint.
The authors cast the original problem into a control theoretic framework and obtained
a viscosity characterization of the solution.

Here we employ the control theoretic approach of \cite{CoxKal} and \cite{BayMil} to
analyze optimal martingale transport problems with cost functionals
which are of American type. The difficulty in our setting is that we have an additional
optimal stopping component. However, the fact that we optimize over continuous models
allows us to prove that the resulting value function is time-independent up to the
terminal time. Since the original problem is infinite dimensional we use the continuity
with respect to the terminal law to restrict it only to measures with finitely many atoms.
Working in a Brownian filtration allows us to recast this finite dimensional approximation
as a recursive sequence of controller-stopper problems with exit-time components.
We prove that the value functions of these problems are viscosity solutions to the
corresponding sequence of elliptic obstacle problems satisfying exact Dirichlet boundary
conditions. We achieve this by applying the stochastic Perron's approach
in the spirit of Bayraktar and Sirbu \cite{BaySir1} where the obstacle problems are
associated with Dynkin games and Rokhlin \cite{Rok} where an elliptic Dirichlet boundary
problem arose from exit-time stochastic control. We circumvent the potential difficulty of
proving a strong comparison result for viscosity sub/supersolutions satisfying \emph{generalized}
boundary conditions (see \cite{Rok}) by using the recursive structure of the problem
to show the exact attainment of these boundary conditions.

The main result in this paper, Theorem~\ref{maintheorem}, is the characterization of the
value function of the finite dimensional martingale transport 
problem as the concave envelope of the pay-off with respect to the probability weights of the
terminal law's atoms. In this final step we use a recent result of Oberman and Ruan \cite{Obe2}
on characterizing convex envelopes as unique viscosity solutions to obstacle
problems with appropriate Dirichlet boundary conditions. One possible application of our
results is the robust pricing
of American options. Indeed, the martingales over which we optimize can be seen as
different models for the stock price with a given marginal distribution at the terminal time.

The rest of the paper is organized as follows: In Section~\ref{sec:pf}, we formulate the
finite dimensional approximation of the Martingale Optimal Transport problem, see
\eqref{vecnotation}. In Section~\ref{sec:main-result}, we employ the stochastic Perron's
method to characterize the value function as the unique viscosity solution of the corresponding
Dirichlet obstacle problem and to show its concave envelope form in an appropriate phase space.
Section~\ref{sec:example} illustrates how our results can be achieved in a probabilistic
framework and provides concrete examples.

\section{Problem formulation}\label{sec:pf}
We define the set of measures $\mathcal{P}$ as
\begin{align*}
  \mathcal{P}:=\{\mu\in\mathcal{B}(\mathbb{R}_+) :\mu(\mathbb{R}_+)=1\text{ and }\int |x| \mu(dx)<\infty \},
\end{align*}
and suppose that the terminal law $\mu$ of the martingales in the optimal transport problem \eqref{origprob}
satisfies $\mu\in\mathcal{P}$. In the usual optimal transport framework we can regard the
probability measures $\mathbb{P}$ contained in $P_\mu$ as transporting the initial
Dirac measure $\delta_{M_0}$ (i.e. the law of $M_0$) to the terminal law $\mu$ under
the cost functional $P^\mathbb{P}$ - both of these laws are known at time $t=0$.
On the other hand, notice that the continuous martingale $M$ satisfies
\begin{align}\label{defS}
  M_t=\mathbb{E}[M_T|\mathcal{F}_t]=\int x\, \xi_{t}(dx) \quad\text{for}\quad t\in[0,T],
\end{align}
where $\xi_t$ is the conditional law of $M_T$ given $\mathcal{F}_t$ under the measure
$\mathbb{P}$. In particular, we have that $\xi_0=\mu$ and $\xi_T=\delta_{M_T}$. Therefore,
similarly to the method proposed in \cite{CoxKal}, we can rewrite \eqref{origprob} in
its measure-valued martingale formulation as
\begin{align}\label{modprob}
  &\sup_{(\xi_t)\in\Xi}\sup_{\tau\in\mathcal{T}_0} \mathbb{E}[f(M_\tau)]\quad\text{subject to}\quad \xi_{0}=\mu,
\end{align}
where $\Xi$ is the set of all terminating measure-valued (i.e. $\mathcal{P}$-valued) martingales (see
Definition 2.7 in \cite{CoxKal}) such that $(\int x\, \xi_t(dx))_{t\ge0}$ is a continuous process a.s. with
respect to the filtered probability space $(\Omega,\mathcal{F},(\mathcal{F}_t)_{t\ge0},\mathbb{P})$ 
for all $(\xi_t)_{t\ge0}\in\Xi$, where $(\mathcal{F}_t)_{t\ge0}$ is a Brownian filtration.
Moreover, as in \cite{CoxKal}, we fix the probability space $(\Omega,\mathcal{F},(\mathcal{F}_t)_{t\ge0},\mathbb{P})$
which does not materially change our conclusions.

Let us write \eqref{modprob} in the Markovian form
\begin{align}\label{markovprob}
  U(t,\xi)=\sup_{(\xi_r)\in\Xi}\sup_{\tau\in\mathcal{T}_t} \mathbb{E}[f(M_\tau)|\xi_t=\xi],
\end{align}
and note that we have the following variant of Lemma 3.1 in \cite{CoxKal} the proof
of which can be found in the appendix:
\begin{lemma}\label{lemmacont}
  If $f$ is non-negative and Lipschitz then the function $U$ is continuous in $\xi$ (in the Wasserstein-1 topology)
  and independent of $t$ for $t\in[0,T)$.
\end{lemma}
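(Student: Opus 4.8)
The plan is to establish the two assertions separately — continuity in $\xi$ and time-independence — adapting the arguments of \cite[Lemma 3.1]{CoxKal} to account for the extra optimal stopping layer.

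For the continuity claim, I would fix $t\in[0,T)$ and two measures $\xi,\eta\in\mathcal{P}$ that are close in the Wasserstein-$1$ distance. The idea is to take a near-optimal pair $((\xi_r),\tau)$ for $U(t,\xi)$ and build an admissible competitor for $U(t,\eta)$ by coupling: since we work in a fixed Brownian filtration and $\Xi$ is the set of terminating measure-valued martingales, one can steer from $\eta$ to an intermediate target and then reuse the dynamics that were optimal from $\xi$, or more directly use that measure-valued martingales started at nearby points can be coupled so that the induced scalar processes $M_r=\int x\,\xi_r(dx)$ stay close in $L^1$. Applying the same stopping time $\tau$ along the coupled paths and using that $f$ is Lipschitz, the difference $|\mathbb{E}[f(M_\tau)]-\mathbb{E}[f(\tilde M_\tau)]|$ is controlled by $\mathrm{Lip}(f)\,\mathbb{E}|M_\tau-\tilde M_\tau|$, which in turn is bounded in terms of $\mathcal{W}_1(\xi,\eta)$. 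Swapping the roles of $\xi$ and $\eta$ gives the reverse inequality, hence continuity. Non-negativity of $f$ is used only to keep the stopping problem well posed (so that $\tau\equiv t$ is always a valid lower bound and the value is finite), exactly as in \cite{CoxKal}.

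For the time-independence on $[0,T)$, the key point is that the only constraint on $(\xi_r)\in\Xi$ that ties the dynamics to calendar time is the terminal condition $\xi_T=\delta_{M_T}$, while the scalar process $M$ is merely required to be a continuous martingale. Given $s<t<T$ and a starting measure $\xi$, any admissible pair for $U(t,\xi)$ can be time-changed (sped up) into an admissible pair for $U(s,\xi)$ without altering the law of the stopped value $M_\tau$, because Brownian martingales are invariant under deterministic time change and there is ``enough room'' in $[s,T)$ to reproduce, in compressed time, any behaviour available on $[t,T)$; the terminating/absorbing nature of the measure-valued martingale means we may sit at the absorbing state until time $T$ if needed. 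This shows $U(s,\xi)\ge U(t,\xi)$, and the reverse inequality $U(t,\xi)\ge U(s,\xi)$ follows symmetrically by slowing time down, so $U(\cdot,\xi)$ is constant on $[0,T)$.

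The main obstacle is the first step: making the coupling of two measure-valued martingales with nearby initial laws rigorous while simultaneously respecting the requirement that both live on the \emph{fixed} Brownian probability space and that the associated scalar processes remain continuous martingales with the prescribed terminal marginals. One must be careful that the constructed competitor genuinely lies in $\Xi$ (in particular that it terminates and that $\int x\,\xi_r(dx)$ stays continuous) rather than merely being measure-valued; this is precisely where the Brownian-filtration hypothesis and the structure of terminating measure-valued martingales from \cite[Definition 2.7]{CoxKal} do the work. Once the coupling is in place, the Lipschitz estimate and the time-change argument are routine, so I would present the coupling construction in full detail and treat the remaining estimates briefly, mirroring the appendix proof referenced in the statement.
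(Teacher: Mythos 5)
Your time-independence argument is in substance the paper's: one inequality is immediate (for $t_1<t_2$ the problem started at $t_1$ has more room, e.g.\ a larger family of stopping times), and the other is obtained by a deterministic affine time change, in the paper $\tilde\tau=a\tau+b$, $\tilde\xi_{at+b}=\xi_t$ with $a=\frac{T-t_2}{T-t_1}$, $b=\frac{T(t_2-t_1)}{T-t_1}$, which leaves $\int x\,\xi_\tau(dx)$ unchanged; your ``sped up''/``compressed'' wording is reversed in places (to pass from $[t,T)$ to the longer interval $[s,T)$ one delays or stretches, and compression is needed in the opposite direction), but the idea is the same and this half is fine.

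The continuity half, however, contains a genuine gap, and it is exactly the step you yourself flag as ``the main obstacle'' and then defer: the coupling is never constructed, and the asserted estimate $\mathbb{E}\,|M_\tau-\tilde M_\tau|\le d_{\mathcal{W}_1}(\xi,\xi')$ at an \emph{arbitrary} stopping time $\tau$ is precisely what has to be proved — a generic coupling of two measure-valued martingales with nearby starting points gives no such uniform-in-$\tau$ control, and your alternative sketch (steer from $\eta$ to an intermediate target and then reuse the optimal dynamics) consumes time and its admissibility is unclear. The paper (following Lemma 3.1 of \cite{CoxKal}) fills this in concretely: choose a transport plan $\Gamma(dx,dy)=\xi(dx)\,m(x,dy)$ between $\xi$ and $\xi'$ with cost $\int\int|x-y|\,\Gamma(dx,dy)<\varepsilon$, and define $\xi'_s(dy):=\mathbb{E}\big[\int\xi_T(dx)\,m(x,dy)\,\big|\,\mathcal{F}_s\big]$ for $s\ge t$. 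This is automatically a measure-valued martingale in the \emph{same} filtration with $\xi'_t=\xi'$, and the decisive point is the martingale domination $\big|\int x\,\xi_\tau(dx)-\int x\,\xi'_\tau(dx)\big|\le\mathbb{E}\big[\int\int|x-y|\,\xi_T(dx)m(x,dy)\,\big|\,\mathcal{F}_\tau\big]$, so that by optional stopping $\mathbb{E}\big[\big|\int x\,\xi_\tau(dx)-\int x\,\xi'_\tau(dx)\big|\,\big|\,\mathcal{F}_t\big]\le\int\int|x-y|\,\Gamma(dx,dy)<\varepsilon$ holds simultaneously for all $\tau\in\mathcal{T}_t$; the Lipschitz bound on $f$ and a symmetric argument in $\xi,\xi'$ then give $|U(t,\xi)-U(t,\xi')|\le\varepsilon'$. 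Without this (or an equivalent) construction your continuity argument is a plan rather than a proof. (A minor point: $f$ is bounded by standing assumption, so non-negativity is not what keeps the value finite.)
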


The continuity in $\xi$ allows us to apply the finite dimensional reduction from
Section 3.2 in \cite{CoxKal}. In particular, we introduce the set $\mathbb{X}_N=\{x_0,\dots,x_N\}$
where $0\le x_0<x_1<\dots<x_N$ and let $\mathcal{P}^N=\mathcal{P}\cap\mathcal{M}(\mathbb{X}_N)$ 
and $\mathcal{P}(\mathbb{X}_\alpha)=\mathcal{P}\cap\mathcal{M}(\mathbb{X}_\alpha)$ 
for any $\alpha\subseteq \{0,1,\dots,N\}$, where $\mathcal{M}(\mathbb{X}_N)$ resp. $\mathcal{M}(\mathbb{X}_\alpha)$
denote the sets of all measures on $\mathbb{X}_N$ resp. $\mathbb{X}_\alpha:=\{x_i: i\in\alpha\}$.
We assume from now on that the terminal law $\xi$ (i.e. also $\mu$) is an atomic measure and satisfies $\xi\in\mathcal{P}^N$.
Since we work in a Brownian filtration, by martingale representation
for any terminating $\mathcal{P}^N$-valued martingale $(\xi_t)_{t\ge0}$ it is true that
the (nonnegative) martingales $\xi^n_t:=\xi_t(\{x_n\})$ solve an SDE of the form
\begin{align}\label{statesde}
  d\xi^n_t=w^n_td W_t
\end{align}
for $t\ge0$ and $n=0,\dots,N$, where the vector of weights $\mathbf{w}_t=(w_t^0,\dots,w_t^N)$ satisfies $\sum_{n=0}^N w_t^n=0$,
and $\xi^n_t\in\{0,1\}$ implies that $w^n_t=0$. 
The following result, by analogy to Corollary 3.6 in \cite{CoxKal}, follows directly from
Lemma 3.4 in \cite{CoxKal} and allows us to work with a bounded set of controls:
\begin{lemma}\label{prop:tc}
  Under the above assumption that $\mu\in\mathcal{P}^N$, the value function in \eqref{markovprob} for $t\in[0,T)$ reduces to the value function
  \begin{align}\label{atomprob1}
    V(\xi)=\sup_{\mathbf{w}\in\mathcal{A}}\sup_{\tau\in\mathcal{T}_0} \mathbb{E}\left[f\left(\sum_{j=0}^N x_j\, \xi^j_{T^{-1}_{\tau}}\right)|\xi_0=\xi\right],
  \end{align}
  where the admissible control set $\mathcal{A}$ is defined as
  \begin{align*}
    &\mathcal{A}:=\{(\mathbf{w}_r)_{r\ge0}\text{ prog. meas.}: \mathbf{w}_r\in\cl(\mathbb{D}^{N+1})\,,\,\xi^n_r\in\{0,1\}\text{ implies }w^n_r=0\},
  \end{align*}
  with the disk $\mathbb{D}^{k+1}$ being the intersection of the open unit ball with the hyperplane
  $z_1+\dots+z_{k+1}=0$ in $\mathbb{R}^{k+1}$, and $T^{-1}_r$ is the continuous inverse of
  \begin{align}\label{timechange}
    T_r := \int_0^r \lambda_s ds\quad\text{for}\quad r\ge0,
  \end{align}
  where the strictly positive time change rate process $\lambda=(\lambda_r)_{r\ge0}$ satisfies
  \begin{align}  \label{statesde2}
    &\norm{{\mathbf{w}}_r}^2+\lambda_r=1-I_{\{\xi_r=\delta_{x_i}\}}I_{\{T_r=T\}}.
  \end{align}
\end{lemma}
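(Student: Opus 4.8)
The plan is to transfer the infinite-dimensional measure-valued martingale problem \eqref{markovprob} to the finite-dimensional state space via the identification $\xi_t \leftrightarrow (\xi^0_t,\dots,\xi^N_t) \in \mathcal{P}^N$, and then apply a time-change to normalize the control set. First I would invoke Lemma~3.4 of \cite{CoxKal}: since $\mu\in\mathcal{P}^N$ and the filtration is Brownian, any terminating $\mathcal{P}^N$-valued martingale $(\xi_t)$ is, by martingale representation, characterized by the system \eqref{statesde} $d\xi^n_t = w^n_t\, dW_t$ with $\sum_n w^n_t = 0$ and the absorption condition $\xi^n_t\in\{0,1\}\Rightarrow w^n_t=0$; conversely every such $\mathbf{w}$ generates an admissible state process. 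This reduces the outer supremum over $\Xi$ to a supremum over progressively measurable $\mathbb{R}^{N+1}$-valued controls $\mathbf{w}$ lying in the hyperplane $\{\sum z_n = 0\}$, and since $f$ depends only on $\sum_j x_j \xi^j_T = M_T$ and the terminal law is fixed, the inner optimal stopping problem is unchanged.

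The key remaining step is the time-change that replaces unbounded controls by controls valued in the closed disk $\cl(\mathbb{D}^{N+1})$. The point is that the value function \eqref{markovprob} is time-independent on $[0,T)$ by Lemma~\ref{lemmacont}, so the problem is genuinely stationary in $t$ and we are free to reparametrize time. Given any control $\mathbf{w}$ with $\norm{\mathbf{w}_r}$ possibly large, I would rescale by defining, at each instant, the split of total ``speed'' between diffusion and clock: choose $\lambda_r>0$ and a normalized control so that \eqref{statesde2} holds, $\norm{\mathbf{w}_r}^2 + \lambda_r = 1 - I_{\{\xi_r=\delta_{x_i}\}}I_{\{T_r = T\}}$, and then run the state SDE in the new clock $T_r = \int_0^r \lambda_s\,ds$ with inverse $T^{-1}_r$. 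Because Brownian motion is invariant under time-change (the quadratic variation of $\int w\,dW$ scales exactly as $\lambda$), the law of $(M_{T^{-1}_\tau})$ under the new control matches the law of $M_\tau$ under the old one, for a correspondingly time-changed stopping time; this is precisely the content of Corollary~3.6 in \cite{CoxKal} and the argument is the same. Hence the two value functions in \eqref{atomprob1} and \eqref{markovprob} coincide.

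I expect the main obstacle to be the careful treatment of the absorption/boundary behaviour under the time change --- specifically, making sure that when the state hits a face of the simplex ($\xi^n_r\in\{0,1\}$) the constraint $w^n_r = 0$ is preserved, and that at full absorption $\xi_r = \delta_{x_i}$ with $T_r = T$ the time-change rate $\lambda_r$ is allowed to vanish (so the clock stops exactly at $T$), as encoded in the indicator on the right-hand side of \eqref{statesde2}. One must check that the set of admissible time changes is rich enough that no control is lost in the reduction, i.e. that the supremum is not decreased; this follows because given any $\mathbf{w}\in\mathcal{A}$ one can always renormalize to satisfy \eqref{statesde2} with a strictly positive $\lambda$ away from absorption, and conversely any original control can be slowed down into the disk. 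Since both directions of this correspondence, together with the measurability of the time-changed stopping times with respect to the time-changed Brownian filtration, are established verbatim in \cite{CoxKal} under the stated hypotheses, the lemma follows ``by analogy to Corollary 3.6 in \cite{CoxKal}'' as claimed, and I would simply indicate the adaptations rather than reproduce the full argument.
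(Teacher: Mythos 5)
Your proposal is correct and takes essentially the same route as the paper, which offers no detailed argument at all but simply asserts that the lemma ``follows directly from Lemma 3.4 in \cite{CoxKal}, by analogy to Corollary 3.6 in \cite{CoxKal}''; your sketch (finite-dimensional reduction of the measure-valued martingale via martingale representation in the Brownian filtration, then the time change normalizing controls into $\cl(\mathbb{D}^{N+1})$ with rate $\lambda$ satisfying \eqref{statesde2}, justified by the $t$-independence of Lemma \ref{lemmacont} and the transfer of stopping times under the time change) is precisely the content of those citations adapted to the stopping setting. No gap.
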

The role of the time change in \eqref{timechange} is to stretch/compress the original
time scale so as to bound the volatility of the state process (i.e. the control process
$\mathbf{w}$). Thus we avoid technical difficulties arising from unbounded control sets
later when proving the viscosity characterization of the value function.

Now notice that the value function $V(\xi)$ can be identified with $\tilde V_N(\xi)$ where for $k=1,\dots,N$, and $\xi\in\mathcal{P}(\mathbb{X}_\alpha)$, with $|\alpha|=k+1$,
we introduce the sequence of problems
\begin{align}\label{recursive}
  \tilde V_k(\xi)=\sup_{\mathbf{w}\in\mathcal{A}^\alpha}\sup_{\tau\in\mathcal{T}_0} \mathbb{E}\Big[&\tilde V_{k-1}(\xi_\sigma)I_{\{T_\sigma\le\tau\}}
  +f\Big(\sum_{j=0}^N x_j\, \xi^j_{T^{-1}_{\tau}}\Big)I_{\{T_\sigma>\tau\}}|\xi_0=\xi\Big],
\end{align}
with
\begin{align}
  \label{Aalphadef}
  \mathcal{A}^\alpha&:=\{(\mathbf{w}_r)_{r\ge0}\text{ prog. meas.}:\mathbf{w}_r\in\cl(\mathbb{D}^{N+1})\,,\\
  &\phantom{:=\{(\mathbf{w}_r)_{r\ge0}\text{ prog. meas.}:\,\,}\,w^i\equiv0\text{ for any } i\in\{0,1,\dots,N\}\setminus\alpha\},\notag\\
  \label{sigmadef}
  \sigma&:=\inf\{s\ge 0 : \xi_s\in\mathcal{P}(\mathbb{X}_{\alpha'})\text{ for some } \alpha'\text{ with } |\alpha'|\le k\text{ or }T_s=T\},
\end{align}
and $\tilde V_0(\xi)=f(x_i)$ for $\xi=\delta_{x_i}$. From now on we will denote the
time changed filtration as $(\mathcal{G}_t)_{t\ge0}:=(\mathcal{F}_{T_t})_{t\ge0}$
and suppress its dependence on $\lambda$ for notational purposes.
The following lemma shows that we can ignore controls which are small enough
and that we can work with stopping times in the time changed filtration.
\begin{lemma}
  The value function $\tilde V_k(\xi)$ can be written as
  \begin{align}\label{recursive1}
    \tilde V_k(\xi)=\sup_{\mathbf{w}\in\interior(\mathcal{A}^\alpha_\varepsilon)}\sup_{\tau\in\mathcal{T}} \mathbb{E}\Big[&\tilde V_{k-1}(\xi_\sigma)I_{\{\sigma\le\tau\}}
    +f\Big(\sum_{j=0}^N x_j\, \xi^j_\tau\Big)I_{\{\sigma>\tau\}}|\xi_0=\xi\Big],\!\!
  \end{align}
  where $\interior(\mathcal{A}_\varepsilon^\alpha):=\{(\mathbf{w}_r)_{r\ge0}\in\mathcal{A}^\alpha: \mathbf{w}_r\in\mathbb{D}^{N+1},\;\xi_r\neq\delta_{x_i}\text{ implies }\norm{\mathbf{w}_r}\ge\varepsilon\}$
  for any $\varepsilon\in[0,1)$ and $\mathcal{T}$ is the set of all $(\mathcal{G}_t)$-stopping times for an appropriately time changed
  filtration $(\mathcal{G}_t)_{t\ge0}$.
\end{lemma}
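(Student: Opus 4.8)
The plan is to prove the two claimed reductions separately. First, that one may shrink the admissible control set from $\mathcal{A}^\alpha$ to $\interior(\mathcal{A}^\alpha_\varepsilon)$ for any $\varepsilon\in[0,1)$; and second, that the time change may be undone so that the formulation \eqref{recursive} with controls stretched by $T_r$ and exit time $\sigma$ can be rewritten in the time-changed filtration $(\mathcal{G}_t)$ with ordinary stopping times $\tau\in\mathcal{T}$ and the exit event $\{\sigma\le\tau\}$. For the first reduction I would argue by approximation: given any $\mathbf{w}\in\mathcal{A}^\alpha$, define $\mathbf{w}^\varepsilon_r:=\mathbf{w}_r$ whenever $\norm{\mathbf{w}_r}\ge\varepsilon$ and perturb it to have norm exactly $\varepsilon$ (respecting the linear constraint $\sum_n w^n=0$ and the boundary condition $\xi^n_r\in\{0,1\}\Rightarrow w^n_r=0$) on the set where $\norm{\mathbf{w}_r}<\varepsilon$ and $\xi_r\neq\delta_{x_i}$. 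The resulting state processes converge, uniformly on compacts in probability, to those driven by $\mathbf{w}$ as $\varepsilon\downarrow 0$ — this uses standard SDE stability estimates together with the boundedness of the control set $\cl(\mathbb{D}^{N+1})$ — and since $f$ is bounded and continuous (hence $\tilde V_{k-1}$ is bounded, by induction), dominated convergence gives that the value over $\interior(\mathcal{A}^\alpha_\varepsilon)$ converges up to the value over $\mathcal{A}^\alpha$; the reverse inequality is trivial by inclusion. Care is needed where $\xi_r$ hits $\delta_{x_i}$, but there the state is absorbed and the constraint \eqref{statesde2} forces $\lambda_r=1$, so the perturbation is inactive.

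For the second reduction I would exploit that the map $r\mapsto T_r$ is a strictly increasing, absolutely continuous bijection of $[0,\infty)$ onto itself (since $\lambda$ is strictly positive and $\norm{\mathbf{w}_r}^2+\lambda_r=1$ off the absorbing set), with continuous inverse $T^{-1}$. Writing $\tilde\xi_t:=\xi_{T^{-1}_t}$, the time-changed process solves an SDE in the filtration $(\mathcal{G}_t)=(\mathcal{F}_{T_t})$ driven by the time-changed Brownian motion $\widetilde W_t=\int_0^{T^{-1}_t}\sqrt{\lambda_s}\,dW_s$ (a $(\mathcal{G}_t)$-Brownian motion by Lévy's characterization / the Dambis–Dubins–Schwarz theorem), with volatility coefficient $\mathbf{w}_{T^{-1}_t}/\sqrt{\lambda_{T^{-1}_t}}$, which lies in $\interior(\mathbb{D}^{N+1})$ precisely when $\norm{\mathbf{w}}<1$. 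Under this change of clock, $\sigma$ as defined in \eqref{sigmadef} maps to $T_\sigma$, the terminal condition $T_s=T$ becomes $s=T$ after time change (so the exit time is $\sigma\wedge T$ in the new clock), the event $\{T_\sigma\le\tau\}$ becomes $\{\sigma\le\tau\}$ with $\tau$ now a $(\mathcal{G}_t)$-stopping time, and $\xi^j_{T^{-1}_\tau}$ becomes $\tilde\xi^j_\tau$. The correspondence $\tau\leftrightarrow T_\tau$ (resp.\ $\tau\leftrightarrow T^{-1}_\tau$) is a bijection between $(\mathcal{F}_t)$-stopping times valued in $[0,T]$ and $(\mathcal{G}_t)$-stopping times, so the two supremal values coincide. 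The one subtlety is that a priori $\lambda$ (hence the clock) depends on the chosen control $\mathbf{w}$; but once we have restricted to $\interior(\mathcal{A}^\alpha_\varepsilon)$ the pair $(\mathbf{w},\lambda)$ is determined up to the absorbing set and the clock is a genuine bijection there, so the two parametrizations of the admissible set match.

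The main obstacle I expect is handling the interface behavior cleanly — specifically, making the perturbation $\mathbf{w}\mapsto\mathbf{w}^\varepsilon$ in the first step while simultaneously keeping the state in the correct face $\mathcal{P}(\mathbb{X}_\alpha)$, respecting the absorption constraint at the vertices $\delta_{x_i}$, and ensuring measurability of the perturbed control; and, relatedly, verifying that the exit time $\sigma$ is a.s.\ not discontinuous under the perturbation (no mass of perturbed paths should exit earlier or later in a way that changes the limit). This is where the strict positivity of $\lambda$ away from the absorbing set and the recursive/inductive hypothesis that $\tilde V_{k-1}$ is bounded and (by the results to follow) continuous up to the relevant boundary are used. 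Everything else — the SDE stability estimates, the Dambis–Dubins–Schwarz time change, the bijection of stopping times — is standard once the set-up is in place.
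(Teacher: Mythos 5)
Your second reduction (the correspondence between $(\mathcal{F}_t)$-stopping times in $[0,T]$ and $(\mathcal{G}_t)$-stopping times via the strictly increasing continuous bijection $T_r$, $T^{-1}_t$, using $\lambda>0$ and $\norm{\mathbf{w}_r}<1$) is essentially the paper's argument, just dressed up with Dambis--Dubins--Schwarz machinery that is not really needed. The problem is your first reduction. The lemma asserts that for \emph{every fixed} $\varepsilon\in[0,1)$ the supremum over $\interior(\mathcal{A}^\alpha_\varepsilon)$ already attains $\tilde V_k(\xi)$ --- and this is exactly what is used later, where an arbitrary constant $c\in(0,1)$ is fixed and the sup is taken over $\interior(\mathcal{A}^\alpha_c)$ and $\mathbb{D}^{k+1}_c$. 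Your argument (perturb $\mathbf{w}$ up to norm $\varepsilon$ where it is small, invoke SDE stability and dominated convergence as $\varepsilon\downarrow0$) proves at best that the restricted values converge to the unrestricted value as $\varepsilon\downarrow0$; it says nothing about equality at a fixed $\varepsilon$, e.g.\ $\varepsilon=0.99$, where forcing $\norm{\mathbf{w}^\varepsilon_r}\ge\varepsilon$ is not a small perturbation and the perturbed state process is nowhere near the original one, so the stability-plus-dominated-convergence step collapses. (Even for the limit statement you would still have to control the behaviour of the exit time $\sigma$ and of unbounded stopping times under the perturbation, which you flag but do not resolve.)

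The missing idea is that the restriction to $\interior(\mathcal{A}^\alpha_\varepsilon)$ is an \emph{exact} reparametrization, not an approximation: because the problem is time-homogeneous (there is no running cost and, after Lemma 2.1 and the time change, no explicit time dependence), a slow control can be traded for a fast one traversing the same spatial path. Concretely, given $\mathbf{w}\in\interior(\mathcal{A}^\alpha)\setminus\interior(\mathcal{A}^\alpha_\varepsilon)$ one sets $\tilde{\mathbf{w}}^n_s:=\sqrt{\bar\varepsilon_s}\,\mathbf{w}^n_{\phi(s)}$, where $\phi$ is the inverse of $\phi^{-1}(s)=\int_0^s\norm{\mathbf{w}_u}^2\varepsilon^{-2}du$ and $\bar\varepsilon_s=\varepsilon^2/\norm{\mathbf{w}_{\phi(s)}}^2$; then $\tilde{\mathbf{w}}\in\interior(\mathcal{A}^\alpha_\varepsilon)$, the state process driven by $\tilde{\mathbf{w}}$ is, in law, the original state process run on the clock $\phi^{-1}$, and every $(\mathcal{G}_t)$-stopping time $\tau$ for the original control corresponds to the stopping time $\phi^{-1}(\tau)$ for the new one with the same law of the stopped state (and of $\sigma$). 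Hence the value is preserved exactly, for any $\varepsilon\in(0,1)$, with no limiting procedure and no continuity of the value in the control required. Your approach, as written, cannot reach this conclusion.
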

\begin{proof}
  For any time change rate $\lambda$ we have $\lambda_u>0$
  for $u\ge0$ and from \eqref{statesde2} it follows that $\norm{\mathbf{w}_u}<1$. Moreover,
  since $\lambda$ is strictly positive, we have that $T_r$ and $T^{-1}_t$ are strictly increasing.
  It follows immediately that if $\tau\in[0,T]$ is an $(\mathcal{F}_{t})$-stopping time then $T^{-1}_\tau\ge0$ is
  a $(\mathcal{G}_{t})$-stopping time and, conversely, if $\tau\ge0$ is a $(\mathcal{G}_{t})$-stopping
  time then $T_\tau\in[0,T]$ is an $(\mathcal{F}_{t})$-stopping time. Therefore in
  \eqref{recursive} we can substitute $\mathcal{T}_0$ with $\mathcal{T}$ and $\tau$ with $T_\tau$.

  What is left is to prove that we can take the outer supremum in \eqref{recursive} over $\interior(\mathcal{A}^\alpha_\varepsilon)\subset\interior(\mathcal{A}^\alpha)$.
  For $0<\varepsilon<1$ and any $\mathbf{w}\in\interior(\mathcal{A}^\alpha)\setminus\interior(\mathcal{A}_{\varepsilon}^\alpha)$
  we can choose $\tilde{\mathbf{w}}\in\interior(\mathcal{A}_{\varepsilon}^\alpha)$
  defined as $\tilde{\mathbf{w}}^n_s:=\sqrt{\bar\varepsilon_s}\mathbf{w}^n_{\phi(s)}$
  where
  \begin{align*}
    \phi(s)=\int_0^s\bar\varepsilon_udu\quad\text{with}\quad\bar\varepsilon_s=\frac{\varepsilon^2}{\norm{\mathbf{w}_{\phi(s)}}^2},
  \end{align*}
  and $\phi(s)$ is the right-continuous inverse of the (non-strictly) increasing continuous function $\phi^{-1}(s)$ given by
  \begin{align*}
    \phi^{-1}(s)=\int_0^s\frac{\norm{\mathbf{w}_{u}}^2}{\varepsilon^2}du.
  \end{align*}
  From \eqref{statesde} we see that $\xi^n_r$ (corresponding to the control $\mathbf{w}$) has the same distribution as
  $\tilde\xi^n_{\phi^{-1}(r)}$ (corresponding to the control $\tilde{\mathbf{w}}$).
  Hence, for any $(\mathcal{G}_{t})$-stopping time $\tau$ we have that $\tilde\tau=\phi^{-1}(\tau)$
  is a $(\mathcal{G}_{\phi(t)})$-stopping time such that
  $\xi^n_\tau$ has the same law as $\tilde\xi^n_{\tilde\tau}$.
  We conclude from \eqref{recursive}.
\end{proof}

Before going further we introduce some additional notation. Let $\alpha(\xi)$ be the subset
of elements in $\mathbb{X}_N$ to which the atomic measure $\xi\in\mathcal{P}^N$ prescribes nonzero probability
and notice that we have the consistency conditions
\begin{align*}
  \tilde V_k(\xi)=\tilde V_{|\alpha(\xi)|-1}(\xi)\quad\text{for}\quad k\ge|\alpha(\xi)|.
\end{align*}
For every $\xi\in\mathcal{P}^N$ with $|\alpha(\xi)|=k+1$ it is
true that $\xi=\sum_{j=0}^{k} \xi^{i_j}\delta_{x_{i_j}}$ where $\alpha(\xi)=\{x_{i_0},\dots,x_{i_k}\}\subseteq\mathbb{X}_N$.
Hence, we can identify every $\xi\in\mathcal{P}^N$ with the vector $\xi^\alpha := (\xi^{i_0},\xi^{i_1},\dots,\xi^{i_k})\in\interior(\Delta^{k+1})$
where $\alpha=\{i_0,\dots,i_k\}$ and $\Delta^{k+1}:=\{\mathbf{z}\in\mathbb{R}_{\ge0}^{k+1}: \sum z_i=1\}$. We let
\begin{align}\label{vecnotation}
  V_\alpha(\xi^\alpha)=\tilde V_{|\alpha(\xi)|-1}(\xi),\quad \bar f (\xi^\alpha)=f(\mathbf{x}^\alpha\cdot\xi^\alpha),
\end{align}
where $\mathbf{x}^\alpha:=(x_{i_0},\dots,x_{i_k})$. For any $r\ge0$ and $\mathbf{w}=(w^0,\dots,w^N)\in\interior(\mathcal{A}^\alpha)$
we also let $\bm\xi^{\mathbf{w},r,\xi^\alpha}_u:=(\xi^{i_0,w^{i_0},r}_u,\xi^{i_1,w^{i_1},r}_u,\dots,\xi^{i_k,w^{i_k},r}_u)$,
where $\xi^{i_j,w^{i_j},r}_u$ is the unique strong solution to \eqref{statesde} with control $w^{i_j}$ and initial condition
$\xi^{i_j,w^{i_j},r}_u=\xi^{i_j}$ for $u\le r$. 
Denote by $\xi^{\mathbf{w},r,\xi^\alpha}$ the 
$\mathcal{P}^N$-valued martingale corresponding to $\bm\xi^{\mathbf{w},r,\xi^\alpha}$,
i.e. $\xi^{\mathbf{w},r,\xi^\alpha}_u:=\sum_{j=0}^{k} \xi^{i_j,w^{i_j},r}_u\delta_{x_{i_j}}$.
For short we let $\bm\xi^{\mathbf{w},\xi^\alpha}:=\bm\xi^{\mathbf{w},0,\xi^\alpha}$
and $\xi^{\mathbf{w},\xi^\alpha}:=\xi^{\mathbf{w},0,\xi^\alpha}$.
\section{Viscosity characterization of the value function using stochastic Perron's method}\label{sec:main-result}
We want to obtain the viscosity characterization of the value function $V_\alpha$.
Fix $0<c<1$ and $\alpha\subseteq\{0,\dots,N\}$ with $|\alpha|=k+1\ge2$ for some
integer $k\ge1$. Using \eqref{vecnotation} rewrite the value function from \eqref{recursive1}
as
\begin{align}
  \label{recursivep}
  V_\alpha(\xi^\alpha)&=\sup_{\mathbf{w}\in\interior(\mathcal{A}^\alpha_c)}\sup_{\tau\in\mathcal{T}}\mathbb{E}\left[\tilde V_{k-1}(\xi^{\mathbf{w},r,\xi^\alpha}_{\sigma})I_{\{\sigma\le\tau\}}+\bar f(\bm{\xi}^{\mathbf{w},r,\xi^\alpha}_\tau)I_{\{\sigma>\tau\}} \right],
\end{align}
where $\xi^\alpha\in\Delta^{k+1}$. Our aim is to show that $V_\alpha$ is the unique
viscosity solution (see e.g. Definition 7.4 in \cite{CraIshLio}) to the associated
Dirichlet obstacle problem given by 
\begin{align}
  \label{viscosity}
  \min\Big\{-\sup_{\mathbf{w}\in\mathbb{D}^{k+1}_c}\frac{1}{2}\tr(\mathbf{w}\mathbf{w}'D_{\bm{\xi}}^2V_\alpha), V_\alpha-\bar f\Big\}&=0
  \quad\text{on}\quad\interior(\Delta^{k+1}),\\
  \label{boundarycond}
  \!\!V_\alpha(\xi^\alpha)&=g(\xi^\alpha):= V_{\alpha'}(\xi^{\alpha'})\quad\text{on}\quad \partial\Delta^{k+1},\!
\end{align}
where $\xi^{\alpha'}$ and $\alpha'$ correspond to the nonzero components of $\xi^\alpha$
and $\alpha$, and $\mathbb{D}^{k+1}_c:=\nobreak \{\mathbf{w}\in\mathbb{D}^{k+1}:\norm{\mathbf{w}}>c\}$.
The derivative $D_{\bm{\xi}}^2$ is to be understood in the directional sense - i.e. we restrict
ourselves to second directional derivatives $\tr(\mathbf{w}\mathbf{w}'D_{\bm{\xi}}^2)$ w.r.t. directions lying in the set $\mathbb{D}^{k+1}_c$.

We are now ready to state the main result of the paper - its proof relies on the stochastic
Perron's method and we present it in the next section.
\begin{theorem}\label{maintheorem}
  The function $V_\alpha:\Delta^{k+1}\to\mathbb{R}$ defined in \eqref{recursivep} is
  the unique continuous viscosity solution of the obstacle problem \eqref{viscosity}
  satisfying the Dirichlet boundary condition \eqref{boundarycond}. Moreover, $V_\alpha$
  is the concave envelope of $\bar f$ on $\Delta^{k+1}$ - i.e. denoting the projection
  of $\Delta^{k+1}$ onto $\mathbb{R}_{\ge0}^{k}$ by $\tilde\Delta^{k}$ and the
  projected functions $\tilde V_\alpha,\tilde f:\tilde\Delta^{k}\to\mathbb{R}$ as
  \begin{align}\label{projdef1}
    &\tilde V_\alpha(z_0,\dots,z_{k-1}):=V_\alpha\Big(z_0,\dots,z_{k-1},1-\sum_{i=1}^{k-1} z_i\Big),\\
    \label{projdef2}
    &\tilde f(z_0,\dots,z_{k-1}):=\bar f\Big(z_0,\dots,z_{k-1},1-\sum_{i=1}^{k-1} z_i\Big),
  \end{align}
  the function $\tilde V_\alpha$ is the concave envelope of $\tilde f$.
\end{theorem}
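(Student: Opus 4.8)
The plan is to prove Theorem~\ref{maintheorem} in three stages: first establish that $V_\alpha$ is \emph{a} viscosity solution of \eqref{viscosity}--\eqref{boundarycond} via the stochastic Perron's method, then invoke a comparison/uniqueness result to conclude it is \emph{the unique} such solution, and finally identify this unique solution with the concave envelope of $\bar f$ using the characterization of Oberman and Ruan~\cite{Obe2}. The argument is inherently recursive in $k=|\alpha|-1$: the boundary data $g$ in \eqref{boundarycond} is $V_{\alpha'}$ for lower-dimensional faces, so everything below is to be read as an induction on $k$, the base case $k=0$ (single atom) being $\tilde V_0(\delta_{x_i})=f(x_i)$, which is trivially its own concave envelope on a point.

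For the stochastic Perron step I would, following Bayraktar--Sirbu~\cite{BaySir1} and Rokhlin~\cite{Rok}, define the set of \emph{stochastic supersolutions} $\mathcal{U}^+$ (respectively \emph{subsolutions} $\mathcal{U}^-$) as continuous functions on $\Delta^{k+1}$ dominating (resp.\ dominated by) $\bar f$, satisfying the boundary condition with the correct inequality, and enjoying the appropriate super/submartingale property along the controlled state process $\bm\xi^{\mathbf{w},r,\xi^\alpha}$ stopped at $\sigma\wedge\tau$; crucially, because the time change in \eqref{timechange}--\eqref{statesde2} makes the control set $\mathbb{D}^{k+1}_c$ bounded and bounded away from $0$ on the interior, the exit time $\sigma$ from $\interior(\Delta^{k+1})$ is finite and the state process is genuinely elliptic there, which is what lets one run Perron's argument without degeneracy. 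One then shows $v^-:=\sup_{u\in\mathcal{U}^-}u$ is a viscosity supersolution and $v^+:=\inf_{u\in\mathcal{U}^+}u$ is a viscosity subsolution of \eqref{viscosity} (the usual bump-function argument, using that the obstacle $\bar f$ is continuous and that $\mathbb{D}^{k+1}_c$ is compact so the $\sup_{\mathbf{w}}$ in \eqref{viscosity} is attained), that $v^-\le V_\alpha\le v^+$ by a verification-type estimate against the definition \eqref{recursivep}, and that both $v^\pm$ attain the boundary data \emph{exactly}. This last point is where I expect the main obstacle to lie: rather than proving a strong comparison result for generalized (viscosity) boundary conditions as in~\cite{Rok}, the paper's stated strategy is to exploit the recursive structure --- near a face $\partial\Delta^{k+1}$ one of the weights $\xi^n$ is forced to $0$ with $w^n=0$, so the process is, up to a lower-dimensional control, pinned to that face, and one uses the already-established (inductive) continuity of $V_{\alpha'}$ together with the constraint $\xi^n_r\in\{0,1\}\Rightarrow w^n_r=0$ to show $\sigma\to0$ and hence $v^\pm(\xi^\alpha)\to g$ as $\xi^\alpha$ approaches the face. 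Granting exact attainment of the Dirichlet data, a standard comparison principle for the elliptic obstacle problem \eqref{viscosity} on $\interior(\Delta^{k+1})$ with \emph{continuous} (exact) boundary data forces $v^-=v^+$, hence $V_\alpha=v^-=v^+$ is the unique continuous viscosity solution.

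For the concave-envelope identification, I would first verify directly from \eqref{recursivep} that $V_\alpha$ is concave on $\Delta^{k+1}$ and dominates $\bar f$ (concavity because the controller can always split $\xi^\alpha$ into a martingale averaging two interior points and stop immediately with probability weights, giving the Jensen-type inequality; domination by taking $\tau=0$), so $V_\alpha$ lies above the concave envelope of $\bar f$; the reverse inequality comes from the obstacle constraint $V_\alpha\le\bar f$ wherever the process is not stopped combined with concavity of $V_\alpha$ off the contact set. More cleanly, by Oberman--Ruan~\cite{Obe2} the concave envelope of $\tilde f$ on $\tilde\Delta^k$ is the unique viscosity solution of exactly the obstacle problem $\min\{-\lambda_{\max}(D^2 u),\,u-\tilde f\}=0$ in the interior with boundary value equal to the concave envelope of $\tilde f$ restricted to $\partial\tilde\Delta^k$; since $-\sup_{\mathbf{w}\in\mathbb{D}^{k+1}_c}\tfrac12\tr(\mathbf{w}\mathbf{w}'D^2_{\bm\xi})$ is (up to the harmless normalization by $\|\mathbf{w}\|\le1$ and the restriction to the hyperplane, which matches the tangent space of $\Delta^{k+1}$) precisely $-\lambda_{\max}$ of the Hessian in the relevant directions, and since by the inductive hypothesis the boundary data $g=V_{\alpha'}$ is already the concave envelope of $\tilde f$ on each face, the two obstacle problems coincide and have the same unique solution. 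Therefore $\tilde V_\alpha$ equals the concave envelope of $\tilde f$, which closes the induction. The one technical wrinkle to check is that the constant $c$ (equivalently the $\varepsilon$-truncation of controls) does not change the PDE: since the $\sup$ of a linear-in-$D^2$ functional over $\mathbb{D}^{k+1}_c$ versus over the full disk agree for directions of norm $1$ and the envelope operator is homogeneous, the truncation is immaterial, exactly as the remark after Lemma~\ref{prop:tc} anticipates.
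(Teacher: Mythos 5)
Your overall architecture matches the paper's: stochastic Perron envelopes $v^\pm$, exact attainment of the Dirichlet data exploited through the induction on $k$, comparison/uniqueness via Oberman--Ruan for the (projected) obstacle problem $\min\{-\lambda_{\max}(D^2u),u-\tilde f\}=0$, and identification of the unique solution with the concave envelope of $\tilde f$. The gap is at the step you yourself flag as the main obstacle: exact boundary attainment of $v^-$ and $v^+$. Your proposed mechanism --- that near a face the process is ``pinned'' to the face because $\xi^n_r\in\{0,1\}$ forces $w^n_r=0$, so $\sigma\to0$ and $v^\pm\to g$ --- does not work as stated. The constraint on $w^n$ bites only when $\xi^n$ equals $0$ or $1$ exactly; for a starting point merely close to a face the controller is free to move the small weight, the exit time from $\interior(\Delta^{k+1})$ need not be small, and even if it were, it is the exit \emph{position} and the possibility of stopping at $\bar f$ before exit that control the value. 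This is precisely the boundary-layer phenomenon that in exit-time control ordinarily forces one to work with generalized (viscosity) boundary conditions, as in Rokhlin, so some substitute argument is mandatory, not a routine continuity estimate.

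The paper's resolution (its Proposition \ref{viscosityexactbound}) is a pair of explicit barriers built from the inductive hypothesis (Assumption \ref{assconc}, i.e.\ $g=V_{\alpha'}$ is the concave envelope of $\bar f$ on each face): the concave envelope $\overline v$ of $\bar f$ on all of $\Delta^{k+1}$ equals $g$ on $\partial\Delta^{k+1}$ and is shown, via the It\^o--Tanaka formula, to be a stochastic \emph{supersolution} (the supermartingale inequality holds for every admissible control), which pins $v^+=g$ on the boundary; and the value $\underline v$ of the optimal stopping problem \eqref{optimalstopping} under one fixed constant control is continuous, equals $g$ on the boundary, and the martingale property of the stopped value process makes it a stochastic \emph{subsolution}, which pins $v^-=g$. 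This construction is where the concave-envelope structure of the boundary data is actually used probabilistically, and it is missing from your sketch; without it (or an equivalent barrier argument) the comparison step has no exact boundary data to compare with, and the chain $v^+\le v^-\le V_\alpha\le v^+$ does not close. Two further minor points: in your ``direct'' alternative for the envelope identification you state the obstacle constraint backwards ($V_\alpha\ge\bar f$, not $\le$), and the comparison principle you call ``standard'' is for a degenerate operator (the concave-envelope operator), so citing the specific Dirichlet comparison result of Oberman--Ruan, as the paper does, is not optional.
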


\subsection{Proof of Theorem \ref{maintheorem}}
We begin by introducing the notions of stochastic sub- and supersolutions.
\begin{definition}\label{defstochsubsol}
  The set of stochastic subsolutions to the PDE \eqref{viscosity} with the boundary
  condition \eqref{boundarycond}, denoted by ${\mathcal{V}}^-$, is the set of functions
  $v:\Delta^{k+1}\to\mathbb{R}$ that have the following properties:
  \begin{enumerate}[label=\normalfont(\roman*)]
    \item
      They are continuous and bounded, and satisfy the boundary condition
      \begin{align}\label{subsolcond}
        v(\xi^\alpha)\le g(\xi^\alpha)\quad\text{on}\quad \partial\Delta^{k+1}.
      \end{align}
    \item
      For each $\tau\in\mathcal{T}$ and $\bm{\xi}\in\mathcal{G}_\tau$ with $\mathbb{P}(\bm{\xi}\in\Delta^{k+1})=1$
      there exists a control $\mathbf{w}\in\interior(\mathcal{A}^\alpha)$ such that for any $\rho\in\mathcal{T}$
      with $\rho\in[\tau,\sigma(\tau,\bm\xi,\mathbf{w})]$ we have a.s. that
      \begin{align}\label{submartingale}
        v(\bm{\xi})\le\mathbb{E}[v(\bm\xi^{\mathbf{w},\tau,\bm{\xi}}_{\rho\wedge\tau_*(v)})|\mathcal{G}_\tau],
      \end{align}
      where the $(\mathcal{G}_t)$-stopping times $\sigma(\tau,\bm\xi,\mathbf{w})$
      and $\tau_*(v)$ are defined as
      \begin{align}
        \label{sigmaexitdef}
        &\sigma(\tau,\bm\xi,\mathbf{w}):=\inf\{s\ge \tau : \bm\xi^{\mathbf{w},\tau,\bm\xi}_s\notin\interior(\Delta^{k+1})\},\\
        \label{tauexit}
        &\tau_*(v)\equiv\tau_*(v;\tau,\bm\xi,\mathbf{w}):=\inf\{s\ge \tau :
          v(\bm\xi^{\mathbf{w},\tau,\bm\xi}_s)\le \bar f(\bm\xi^{\mathbf{w},\tau,\bm\xi}_s)\}.
      \end{align}
  \end{enumerate}
\end{definition}
\begin{definition}\label{defstochsupsol}
  The set of stochastic supersolutions to the PDE \eqref{viscosity} with the boundary
  condition \eqref{boundarycond}, denoted by ${\mathcal{V}}^+$, is the set of functions
  $v:\Delta^{k+1}\to\mathbb{R}$ that have the following properties:
  \begin{enumerate}[label=\normalfont(\roman*)]
    \item
      They are continuous and bounded, and satisfy the boundary condition
      \begin{align}\label{supsolcond}
        v(\xi^\alpha)\ge g(\xi^\alpha)\quad\text{on}\quad\partial\Delta^{k+1}.
      \end{align}
    \item
      For each $\tau\in\mathcal{T}$ and $\bm{\xi}\in\mathcal{G}_\tau$ with $\mathbb{P}(\bm{\xi}\in\Delta^{k+1})=1$,
      for any control $\mathbf{w}\in\interior(\mathcal{A}^\alpha_c)$ and any $\rho\in\mathcal{T}$
      with $\rho\in[\tau,\sigma(\tau,\bm\xi,\mathbf{w})]$ we have a.s. that
      \begin{align}\label{supermartingale}
        v(\bm{\xi})\ge\mathbb{E}[v(\bm\xi^{\mathbf{w},\tau,\bm{\xi}}_{\rho})|\mathcal{G}_\tau],
      \end{align}
      where $\sigma(\tau,\bm\xi,\mathbf{w})$ is defined as in \eqref{sigmaexitdef}.
  \end{enumerate}
\end{definition}
Clearly ${\mathcal{V}}^-$ (resp. ${\mathcal{V}}^+$) is nonempty since $\bar f$ is bounded
from below (resp. above) and any constant which is small (large) enough belongs to
${\mathcal{V}}^-$ (resp. ${\mathcal{V}}^+$). Actually, we can easily verify that
$\bar f\in{\mathcal{V}}^-$. The following lemma proves an important property of the
sets ${\mathcal{V}}^-$ and ${\mathcal{V}}^+$.
\begin{lemma}\label{lemmasupinf}
  For any two $v^1,v^2\in{\mathcal{V}}^-$ we have that $v^1\vee v^2\in{\mathcal{V}}^-$.
  For any two $v^1,v^2\in{\mathcal{V}}^+$ we have that $v^1\wedge v^2\in{\mathcal{V}}^+$.
\end{lemma}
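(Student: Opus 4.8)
The plan is to prove the two statements (closure under maximum for $\mathcal{V}^-$, closure under minimum for $\mathcal{V}^+$) separately, with the argument for $\mathcal{V}^+$ being the more straightforward one; the case of $\mathcal{V}^-$ requires slightly more care because of the obstacle-type stopping time $\tau_*(v)$ that appears in \eqref{submartingale}. In both cases, continuity, boundedness, and the boundary conditions \eqref{subsolcond}/\eqref{supsolcond} for $v^1\vee v^2$ (resp.\ $v^1\wedge v^2$) are immediate: the maximum (resp.\ minimum) of two continuous bounded functions is continuous and bounded, and on $\partial\Delta^{k+1}$ we have $v^1\vee v^2\le g$ since each $v^i\le g$ there (resp.\ $v^1\wedge v^2\ge g$). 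So the real content is verifying property (ii).

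For the supersolution case, fix $\tau\in\mathcal{T}$, $\bm\xi\in\mathcal{G}_\tau$ with $\bm\xi\in\Delta^{k+1}$ a.s., and an arbitrary admissible control $\mathbf{w}\in\interior(\mathcal{A}^\alpha_c)$ together with $\rho\in\mathcal{T}$, $\rho\in[\tau,\sigma(\tau,\bm\xi,\mathbf{w})]$. Since both $v^1,v^2\in\mathcal{V}^+$, the supermartingale inequality \eqref{supermartingale} holds for each: $v^i(\bm\xi)\ge\mathbb{E}[v^i(\bm\xi^{\mathbf{w},\tau,\bm\xi}_\rho)\mid\mathcal{G}_\tau]$. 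Because $v^i(\bm\xi^{\mathbf{w},\tau,\bm\xi}_\rho)\ge (v^1\wedge v^2)(\bm\xi^{\mathbf{w},\tau,\bm\xi}_\rho)$ pointwise, taking conditional expectations gives $v^i(\bm\xi)\ge\mathbb{E}[(v^1\wedge v^2)(\bm\xi^{\mathbf{w},\tau,\bm\xi}_\rho)\mid\mathcal{G}_\tau]$ for each $i$, and taking the minimum over $i$ on the left yields $(v^1\wedge v^2)(\bm\xi)\ge\mathbb{E}[(v^1\wedge v^2)(\bm\xi^{\mathbf{w},\tau,\bm\xi}_\rho)\mid\mathcal{G}_\tau]$, which is exactly what \eqref{supermartingale} demands for $v^1\wedge v^2$. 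Note this argument works because the control $\mathbf{w}$ and the stopping time $\rho$ in Definition~\ref{defstochsupsol}(ii) are quantified universally, so no coordination between $v^1$ and $v^2$ is needed.

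For the subsolution case the subtlety is that property (ii) of Definition~\ref{defstochsubsol} asserts the \emph{existence} of a good control for each $v^i$, and these may differ; moreover the stopping time $\tau_*(v^i)$ in \eqref{submartingale} depends on $v^i$. The standard device (as in Bayraktar--Sirbu \cite{BaySir1}) is to switch between the two controls on the random set where one function dominates the other. Concretely, given $\tau$ and $\bm\xi$, let $A:=\{v^1(\bm\xi)\ge v^2(\bm\xi)\}\in\mathcal{G}_\tau$, pick the control $\mathbf{w}^1$ witnessing \eqref{submartingale} for $v^1$ and $\mathbf{w}^2$ for $v^2$, and set $\mathbf{w}:=\mathbf{w}^1 I_A+\mathbf{w}^2 I_{A^c}$; this is again in $\interior(\mathcal{A}^\alpha)$ since each piece is, and $\mathbf{w}_r\in\mathbb{D}^{N+1}$ with the degeneracy condition $\xi^n_r\in\{0,1\}\Rightarrow w^n_r=0$ preserved on each piece. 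One then checks that on $A$, $\tau_*(v^1\vee v^2)\ge\tau_*(v^1)$ (since $v^1\vee v^2\ge v^1$, the obstacle is hit no sooner when measured against the larger function, using also that $v^1(\bm\xi)\ge v^2(\bm\xi)$ keeps us in the regime where $v^1\vee v^2=v^1$ along the relevant portion of the path — this is the point to be careful about), and symmetrically on $A^c$. Then for $\rho\in[\tau,\sigma(\tau,\bm\xi,\mathbf{w})]$, on $A$ we have $(v^1\vee v^2)(\bm\xi)=v^1(\bm\xi)\le\mathbb{E}[v^1(\bm\xi^{\mathbf{w},\tau,\bm\xi}_{\rho\wedge\tau_*(v^1)})\mid\mathcal{G}_\tau]$ and one upgrades the right side to $(v^1\vee v^2)$ evaluated at $\rho\wedge\tau_*(v^1\vee v^2)$ using the optional sampling/tower property together with the submartingale property of $v^1$ up to its own obstacle time, and the inequality $v^1\le v^1\vee v^2$; the case $A^c$ is symmetric. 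Combining the two pieces gives \eqref{submartingale} for $v^1\vee v^2$.

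The main obstacle is the last step for $\mathcal{V}^-$: reconciling the $v$-dependent obstacle stopping time $\tau_*(v)$ when passing from $v^i$ to $v^1\vee v^2$, i.e.\ showing that the submartingale inequality established up to $\rho\wedge\tau_*(v^i)$ can be propagated to one up to $\rho\wedge\tau_*(v^1\vee v^2)$ on the set where $v^1\vee v^2=v^i$. This is handled by a careful application of the optional sampling theorem to the process $s\mapsto v^i(\bm\xi^{\mathbf{w},\tau,\bm\xi}_{s\wedge\tau_*(v^i)})$ — which is a submartingale on $[\tau,\sigma]$ by the defining property of $v^i\in\mathcal{V}^-$ — stopped at the smaller of the two obstacle times, plus the pointwise bound $v^i\le v^1\vee v^2$ and the observation that after $\tau_*(v^i)$ the value $v^i$ has already been dominated by $\bar f\le v^1\vee v^2$, so no loss is incurred. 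Everything else is routine measure-theoretic bookkeeping with conditional expectations.
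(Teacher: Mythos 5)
Your treatment of the supersolution half is correct and is exactly the paper's (omitted) argument: since the control and stopping time in Definition \ref{defstochsupsol}(ii) are quantified universally, taking minima causes no coordination problem. The gap is in the subsolution half. Your construction switches controls only once, at time $\tau$, choosing $\mathbf{w}=\mathbf{w}^1 I_A+\mathbf{w}^2 I_{A^c}$ with $A=\{v^1(\bm\xi)\ge v^2(\bm\xi)\}$, and then tries to ``upgrade'' the inequality for $v^1$ up to $\rho\wedge\tau_*(v^1)$ into the inequality \eqref{submartingale} for $v=v^1\vee v^2$ up to $\rho\wedge\tau_*(v)$. This upgrade fails. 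It is true that $\tau_*(v)\ge\tau_*(v^1)$ (since $v\ge v^1$, the contact set of $v$ with $\bar f$ is contained in that of $v^1$), but on the event $\{\tau_*(v^1)<\rho\wedge\tau_*(v)\}$ the path sits at a point where $v^1\le\bar f<v^2=v$, the obstacle time of the maximum has \emph{not} been reached, and the required inequality concerns $\mathbb{E}[v(\bm\xi^{\mathbf{w},\tau,\bm\xi}_{\rho\wedge\tau_*(v)})\,|\,\mathcal{G}_{\tau_*(v^1)}]$, i.e.\ the value of $v$ at a strictly later random time under the continued dynamics driven by $\mathbf{w}^1$. Beyond $\tau_*(v^1)$ the control $\mathbf{w}^1$ carries no submartingale guarantee for $v^1$, let alone for $v^2$ or $v$, so the pointwise facts $v^1\le v$ and $v^1(\bm\xi_{\tau_*(v^1)})\le\bar f(\bm\xi_{\tau_*(v^1)})$ do not yield the needed bound; ``no loss is incurred'' is not justified, and optional sampling cannot be applied because there is no sub/supermartingale to sample on $(\tau_*(v^1),\rho\wedge\tau_*(v)]$.

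What is actually needed at $\tau_*(v^1)$ is a \emph{new} switch: restart with the control that $v^2$ (the function now realizing the maximum) provides when started at the pair $(\tau_*(v^1),\bm\xi_{\tau_*(v^1)})$, and this may have to be repeated indefinitely, since that control in turn is only good up to its own obstacle/exit time. This is precisely why the paper builds the recursive sequence of triples $(\gamma_n,\mathbf{w}^n,\bm\xi^n)$, pastes the controls via $\mathbf{w}_s=\sum_n \mathbf{1}_{\{s\in[\gamma_n,\gamma_{n+1})\}}\mathbf{w}^n_s$, iterates the one-step inequality to get \eqref{localized}, and then argues that $\gamma_n\to\sigma(\tau,\bm\xi,\mathbf{w})\wedge\tau_*(v;\tau,\bm\xi,\mathbf{w})$ a.s.\ (as in Lemma 2.3 of \cite{BaySir1}) before concluding by bounded convergence and continuity of $v$. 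Your single-switch argument skips both the infinite concatenation and the limiting step, and these are the substance of the proof; as written, the subsolution half does not go through.
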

\begin{proof}
  We will only prove the first part of the lemma - the second part follows in a similar
  way. Denote $v=v^1\vee v^2$ and notice that item (i) in Definition \ref{defstochsubsol}
  is clearly satisfied by $v$. Now fix $\tau\in\mathcal{T}$ and $\bm{\xi}\in\mathcal{G}_\tau$
  as in item (ii) of Definition \ref{defstochsubsol} and introduce the sequence of
  stopping time, control and state process triples $(\gamma_n,\mathbf{w}^n,\bm{\xi}^n)_{n\ge-1}$
  defined recursively as follows:
  \begin{align*}
    &(\gamma_{-1},\mathbf{w}^{-1},\bm\xi^{-1})\equiv(\gamma_0,\mathbf{w}^0,\bm\xi^0)
    :=(\tau,\mathbf{1}_{\{v^1(\bm\xi)\ge v^2(\bm\xi)\}}\mathbf{w}^{0,1}+\mathbf{1}_{\{v^1(\bm\xi)< v^2(\bm\xi)\}}\mathbf{w}^{0,2},\bm\xi^{{\mathbf{w}}^0,\tau,\bm{\xi}})
  \end{align*}
  where $\mathbf{w}^{0,1},\mathbf{w}^{0,2}$ are the controls corresponding to the
  stochastic subsolutions $v^1, v^2$ starting at the pair $(\tau,\bm{\xi})$, and
  for $n=0, 1, 2\dots$:
  \begin{enumerate}[label=\normalfont(\roman*)]
    \item if $v(\bm\xi^n_{\gamma_{n}})\le \bar f(\bm\xi^n_{\gamma_{n}})$ then
      we set
      \begin{align*}
        (\gamma_{n+1},\mathbf{w}^{n+1},\bm\xi^{n+1}):=(\gamma_{n},\mathbf{w}^{n},\bm\xi^{n}).
      \end{align*}
    \item if $v(\bm\xi^n_{\gamma_{n}})=v^i(\bm\xi^n_{\gamma_{n}})>\bar f(\bm\xi^n_{\gamma_{n}})$
      for $i\in\{1,2\}$ then we set
      \begin{align*}
        &\gamma_{n+1}:=\sigma(\gamma_n,\bm\xi^n_{\gamma_{n}},\mathbf{w}^{n}) \wedge\tau_*(v^i;\gamma_n,\bm\xi^n_{\gamma_{n}},\mathbf{w}^{n})\\
        &\mathbf{w}^{n+1}:=\mathbf{w}^{n+1,i},\quad\bm\xi^{n+1}:=\bm\xi^{\mathbf{w}^{n+1,i},\gamma_{n+1},\bm{\xi}^n_{\gamma_{n+1}}},
      \end{align*}
      where $\mathbf{w}^{n+1,i}$ is the control process corresponding to the stochastic
      subsolution $v^i$ starting at the pair $(\gamma_{n+1},\bm{\xi}^n_{\gamma_{n+1}})$,
      and $\tau_*(v^i;\gamma_n,\bm\xi^n_{\gamma_{n}},\mathbf{w}^{n})$ is defined as
      in \eqref{tauexit}.
  \end{enumerate}
  Define the control $\mathbf{w}$ by
  \begin{align*}
    \mathbf{w}_s:=\sum_{n=1}^\infty\mathbf{1}_{\{s\in[\gamma_n,\gamma_{n+1})\}}\mathbf{w}^n_s
  \end{align*}
  and notice that by construction $\bm\xi^n_s=\bm\xi^{\mathbf{w},\tau,\bm\xi}_s$ for
  $s\in[\gamma_n,\gamma_{n+1}]$ and any $n\ge0$. For any stopping time $\rho\in\nobreak[\tau,\sigma(\tau,\bm\xi,\mathbf{w})]$
  denote $\rho\wedge\gamma_n=\rho_n$. By the definition of the sequence $(\gamma_n,\mathbf{w}^n,\bm{\xi}^n)$
  we get that
  \begin{align*}
    &v(\bm\xi^n_{\rho_{n}})=(\mathbf{1}_{\{v^1\ge v^2\}}v^1+\mathbf{1}_{\{v^1< v^2\}}v^2)(\bm\xi^n_{\rho_{n}})\\
    &\le\mathbb{E}[(\mathbf{1}_{\{v^1(\bm\xi^n_{\rho_{n}})\ge v^2(\bm\xi^n_{\rho_{n}})\}}v^1+\mathbf{1}_{\{v^1(\bm\xi^n_{\rho_{n}})< v^2(\bm\xi^n_{\rho_{n}})\}}v^2)(\bm\xi^{n+1}_{\rho_{n+1}})|\mathcal{G}_{\rho_n}]\\
    &\le\mathbb{E}[v(\bm\xi^{n+1}_{\rho_{n+1}})|\mathcal{G}_{\rho_n}],
  \end{align*}
  and by iterating the above we conclude that
  \begin{align}\label{localized}
    &v(\xi)\le\mathbb{E}[v(\bm\xi^{n+1}_{\rho_{n+1}})|\mathcal{G}_{\tau}]=\mathbb{E}[v(\bm\xi^{\mathbf{w},\tau,\bm\xi}_{\rho_{n+1}})|\mathcal{G}_{\tau}],
  \end{align}
  for any $n\ge0$. Now we apply the same reasoning as in the proof of Lemma 2.3 in
  \cite{BaySir1} to conclude that
  \begin{align*}
    \lim_{n\to\infty} \gamma_n=\sigma(\tau,\bm\xi,\mathbf{w})\wedge\tau_*(v;\tau,\bm\xi,\mathbf{w})\quad a.s.
  \end{align*}
  By taking $n\to\infty$ in \eqref{localized} and using the bounded convergence
  theorem we finally obtain that $v$ satisfies \eqref{submartingale} and, hence, is a stochastic subsolution.
\end{proof}

We introduce the assumption:
\begin{assumption}\label{asscont} The boundary function $g$ is continuous on $\partial\Delta^{k+1}$.
\end{assumption}
\begin{proposition}\label{viscositysupersub}
  Under Assumption \ref{asscont} the lower stochastic envelope $v^-:=\sup_{v\in{\mathcal{V}}^-} v\le V_\alpha$ is
  a viscosity supersolution and the upper stochastic envelope $v^+:=\inf_{v\in{\mathcal{V}}^+} v\ge V_\alpha$ is
  a viscosity subsolution of \eqref{viscosity} and \eqref{boundarycond}.
\end{proposition}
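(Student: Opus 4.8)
The plan is to follow the stochastic Perron paradigm of Bayraktar and Sirbu: show that $v^-$ is a viscosity supersolution (and $v^- \le V_\alpha$) and, symmetrically, that $v^+$ is a viscosity subsolution (and $v^+ \ge V_\alpha$). I will spell out the supersolution half; the subsolution half is dual. First I would verify the ordering $v^- \le V_\alpha$: given any $v \in \mathcal{V}^-$, fix a control $\mathbf{w} \in \interior(\mathcal{A}^\alpha)$ witnessing the submartingale property \eqref{submartingale} at $(\tau,\bm\xi) = (0,\xi^\alpha)$, and use $\rho = \sigma(0,\xi^\alpha,\mathbf{w})$ together with the definition of $\tau_*(v)$; at time $\sigma \wedge \tau_*(v)$ we either have exited $\interior(\Delta^{k+1})$, where $v \le g$ and $g$ is dominated by the recursive boundary data $\tilde V_{k-1}$ of \eqref{recursivep}, or $v \le \bar f$, so in both cases $v(\xi^\alpha) \le \mathbb{E}[\cdots] \le V_\alpha(\xi^\alpha)$. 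Taking the sup over $v \in \mathcal{V}^-$ gives $v^- \le V_\alpha$; since $V_\alpha$ is bounded, $v^-$ is a bounded function.

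Next I would establish the viscosity supersolution property of $v^-$ by contradiction. Suppose $\varphi$ is a smooth test function touching $v^-$ from below at an interior point $\xi_0 \in \interior(\Delta^{k+1})$ with $v^-(\xi_0) = \varphi(\xi_0)$, and suppose the supersolution inequality fails, i.e.
\begin{align*}
  \min\Big\{-\sup_{\mathbf{w}\in\mathbb{D}^{k+1}_c}\tfrac12\tr(\mathbf{w}\mathbf{w}'D^2\varphi(\xi_0)),\ v^-(\xi_0)-\bar f(\xi_0)\Big\} < 0.
\end{align*}
Then either $\sup_{\mathbf{w}}\tfrac12\tr(\mathbf{w}\mathbf{w}'D^2\varphi(\xi_0)) > 0$ — so there is a fixed direction $\mathbf{w}^* \in \mathbb{D}^{k+1}_c$ with $\tr(\mathbf{w}^*(\mathbf{w}^*)'D^2\varphi) > 0$ on a small ball $B$ around $\xi_0$ — or $v^-(\xi_0) < \bar f(\xi_0)$, which by continuity persists on $B$. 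I would handle the two cases together by shrinking $B$ so that $v^- - \varphi > \delta$ on $\partial B$ for some $\delta > 0$ (strictness is free because $\varphi$ strictly touches from below off $\xi_0$ after the usual quadratic perturbation), and then constructing a \emph{strictly larger} stochastic subsolution. The candidate is $\hat v := v^- \vee (\varphi + \delta/2)\mathbf{1}_{B}$ glued to $v^-$ outside $B$; one checks it is continuous and exceeds $v^-(\xi_0)$ at $\xi_0$. To verify property (ii) for $\hat v$ I would run the constant control $\mathbf{w}^*$ while the process stays in $B$ (using that the generator makes $\varphi + \delta/2$ a submartingale there), splice in a witnessing subsolution control for $v^-$ upon exiting $B$, and use $\tau_*(\hat v)$ to cut off once $\hat v$ drops to $\bar f$ — the Case $v^- < \bar f$ is automatic because then $\varphi + \delta/2$ can be taken below $\bar f$ on $B$ and the obstacle constraint is inactive. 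This contradicts maximality of $v^-$, so the supersolution inequality holds.

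The boundary condition is where the real work lies, and it is exactly the step the introduction flags. I would show $v^- = g$ on $\partial\Delta^{k+1}$ by arguing that $g$ itself — or rather a suitable local modification of $g$ pulled back from the lower-dimensional value function $V_{\alpha'}$, which by the inductive hypothesis of the recursion is already continuous and is the concave envelope of its own payoff — can be used to build barriers in $\mathcal{V}^-$ achieving the boundary value, so that $v^- \ge g$; combined with the subsolution-side bound $v^- \le v^+ \le$ (something dominated by $g$ on the boundary, via $v^+ \ge g$ flipped) this pins $v^-$ to $g$. The delicate point is that the control set $\mathcal{A}^\alpha$ forces $w^i \equiv 0$ once $\xi^i$ hits $\{0,1\}$, so near a face of the simplex the dynamics degenerate in the normal direction and the process is effectively trapped on that face; this is precisely what makes the boundary data attained \emph{exactly} rather than in the generalized viscosity sense, and I would exploit it by noting that from an interior point the first hitting time of $\partial\Delta^{k+1}$ coincides with the first time some coordinate vanishes, after which the problem collapses to the $\alpha'$-problem whose value is $g$. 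I expect the main obstacle to be making this gluing-and-barrier construction rigorous while respecting the state-dependent control constraint $\xi^n \in \{0,1\} \Rightarrow w^n = 0$ — in particular checking that the spliced control still lies in $\interior(\mathcal{A}^\alpha)$ (resp. $\interior(\mathcal{A}^\alpha_c)$ on the supersolution side) and that the exit-time localization argument from Lemma~2.3 of \cite{BaySir1} goes through with the two stopping times $\sigma$ and $\tau_*$ present simultaneously.
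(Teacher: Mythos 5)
Your interior-point contradiction argument and the inequality $v^-\le V_\alpha$ follow the paper's route, but there are two genuine gaps. First, the gluing step is not legitimate as written: you bump $v^-\vee(\varphi+\delta/2)$ and propose to ``splice in a witnessing subsolution control for $v^-$,'' but $v^-$ is only a pointwise supremum of elements of $\mathcal{V}^-$ — it is a priori merely lower semicontinuous, property (ii) of Definition \ref{defstochsubsol} is not preserved under arbitrary suprema, and $v^-$ has no witnessing control. The paper resolves this by combining the lattice property of $\mathcal{V}^-$ (Lemma \ref{lemmasupinf}) with Proposition 4.1 of \cite{BaySir3} to extract an increasing sequence $v_n\nearrow v^-$ and then glues the bump $\varphi^\eta$ to a single $v_n\in\mathcal{V}^-$ satisfying $v_n-\delta'\ge\varphi$ on the annulus; the subsequent verification of (ii) via the recursive construction of $(\gamma_n,\mathbf{w}^n,\bm\xi^n)$ and the identification $\gamma_\infty=\tau_*(v^\eta)\wedge\sigma$ is precisely the technical work you defer. (Also, the case where only the obstacle inequality fails does not need a separate construction: $\bar f\in\mathcal{V}^-$, so $v^-\ge\bar f$ everywhere.)

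Second, you misplace the boundary analysis. Proposition \ref{viscositysupersub} asserts the super/subsolution property of \eqref{viscosity}--\eqref{boundarycond} under Assumption \ref{asscont} only, i.e.\ the Dirichlet condition in the generalized viscosity sense; in the paper this is handled inside the same contradiction argument by taking the test point $\xi_0\in\partial\Delta^{k+1}$, negating $\max\{-\sup_{\mathbf{w}}L^{\mathbf{w}}\varphi,\varphi-g\}\ge 0$, and checking that $\varphi^\eta<g$ on $B(\xi_0,\varepsilon)\cap\partial\Delta^{k+1}$ so that the bumped function still satisfies \eqref{subsolcond} and lies in $\mathcal{V}^-$. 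You instead attempt to prove exact attainment $v^-=g$ on $\partial\Delta^{k+1}$, which is the content of Proposition \ref{viscosityexactbound} and needs Assumption \ref{assconc} (the concave-envelope structure of $g$), not available here; moreover your route to it is circular — $v^+\ge g$ on the boundary by \eqref{supsolcond}, so $v^+$ gives no upper bound by $g$; in the paper $v^+\le g$ on $\partial\Delta^{k+1}$ comes from exhibiting the concave envelope $\overline v$ of $\bar f$ as a stochastic supersolution via It\^o--Tanaka, and $v^-\ge g$ from the fixed-control optimal stopping value $\underline v$ in \eqref{optimalstopping}, neither of which appears in your sketch.
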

\begin{proof} The proof uses ideas from Theorem 3.1 (and Theorem 4.1) in \cite{BaySir2}
  and Theorem 2 in \cite{Rok}. We repeat the key steps for the lower stochastic envelope
  $v^-$.

  Denote for short $ V\equiv V_\alpha$. It is clear that $v^-\le  V$ since in item (ii)
  of Definition \ref{defstochsubsol} we can choose $\tau=0$, a constant $\bm\xi\in\Delta^{k+1}$
  and $\rho=\sigma(\tau,\bm\xi,\mathbf{w})$ for some control $\mathbf{w}\in\interior(\mathcal{A}^\alpha)$,
  and use the condition \eqref{subsolcond} and \eqref{tauexit}. 

  We will prove the viscosity supersolution property of $v^-$ by contradiction. Take
  a $C^{2}$ test function $\varphi:\Delta^{k+1}\to\mathbb{R}$ such that $v^--\varphi$
  achieves a strict local minimum equal to 0 at some boundary point $\xi_0\in\partial\Delta^{k+1}$
  (the case when $\xi_0\in\interior(\Delta^{k+1})$ is simpler). Assume that $v^-$ is
  not a viscosity supersolution and hence
  \begin{align*}
    \max\big\{(-\sup_{\mathbf{w}\in\mathbb{D}^{k+1}_c}L^\mathbf{w}\varphi)(\xi_0),(\varphi-g)(\xi_0)\big\}<0,
  \end{align*}
  where
  \begin{align*}
    (L^\mathbf{w}\varphi)(\xi):=\frac{1}{2}\tr(\mathbf{w}\mathbf{w}'D_{\bm{\xi}}^2\varphi(\xi)).
  \end{align*}
  It follows that there exists $\tilde{\mathbf{w}}\in\mathbb{D}^{k+1}_c$ such that
  \begin{align}\label{tildew}
    (-L^{\tilde{\mathbf{w}}}\varphi)(\xi_0)<0.
  \end{align}
  By the continuity of $\varphi$, $g$ and the lower semicontinuity of
  $v^-$ we can find a small enough open ball $B(\xi_0,\varepsilon)$ and a small enough
  $\delta>0$ such that
  \begin{align*}
    &(-L^{\tilde{\mathbf{w}}}\varphi)(\xi)<0,\quad \xi\in B(\xi_0,\varepsilon)\cap\Delta^{k+1},\\
    &\varphi<g,\quad \text{on}\quad B(\xi_0,\varepsilon)\cap\partial\Delta^{k+1},\\
    &\varphi(\xi)<v^-(\xi),\quad \xi\in B(\xi_0,\varepsilon)\cap\Delta^{k+1}\setminus\{\xi_0\},\\
    &v^--\delta\ge\varphi\quad\text{on}\quad (\overline{B(\xi_0,\varepsilon)}\setminus B(\xi_0,\varepsilon/2))\cap\Delta^{k+1}.
  \end{align*}
  Using Proposition 4.1 in \cite{BaySir3} together with Lemma \ref{lemmasupinf} above,
  we obtain an increasing sequence of stochastic subsolutions $v_n\in{\mathcal{V}}^-$
  with $v_n\nearrow v^-$. In particular, since $\varphi$ and the $v_n$'s are continuous
  we can use an argument identical to the one in Lemma 2.4 in \cite{BaySir1} to obtain
  for any fixed $\delta'\in(0,\delta)$ a corresponding $v=v_n\in{\mathcal{V}}^-$
  such that
  \begin{align*}
    v-\delta'\ge\varphi\quad\text{on}\quad (\overline{B(\xi_0,\varepsilon)}\setminus B(\xi_0,\varepsilon/2))\cap\Delta^{k+1}.
  \end{align*}
  Now we can choose $\eta\in(0,\delta')$ small enough such that $\varphi^\eta:=\varphi+\eta$
  satisfies
  \begin{align*}
    &(-L^{\tilde{\mathbf{w}}}\varphi^\eta)(\xi)<0,\quad \xi\in B(\xi_0,\varepsilon)\cap\Delta^{k+1},\\
    &\varphi^\eta< g,\quad \text{on}\quad B(\xi_0,\varepsilon)\cap\partial\Delta^{k+1},\\
    &\varphi^\eta<v\quad\text{on}\quad (\overline{B(\xi_0,\varepsilon)}\setminus B(\xi_0,\varepsilon/2))\cap\Delta^{k+1}.
  \end{align*}
  We define
  \begin{align*}
    v^\eta=\begin{cases}
              v\vee\varphi^\eta\;&\text{on}\;\overline{B(\xi_0,\varepsilon)}\cap\Delta^{k+1},\\
              v&\text{otherwise},
           \end{cases}
  \end{align*}
  and notice that $v^\eta$ is continuous and $v^\eta(\xi_0)=v^-(\xi_0)+\eta>v^-(\xi_0)$.
  Since condition \eqref{subsolcond} clearly also holds, we see that $v^\eta$ satisfies
  item (i) of Definition \ref{defstochsubsol}. What is left is to check item (ii) in
  Definition \ref{defstochsubsol} and obtain $v^\eta\in{\mathcal{V}}^-$ which will lead
  to a contradiction since $v^\eta(\xi_0)>v^-(\xi_0)$.

  Choose $\tau\in\mathcal{T}$ and $\bm{\xi}\in\mathcal{G}_\tau$ with $\mathbb{P}(\bm{\xi}\in\Delta^{k+1})=1$,
  and, similarly to the proof of Lemma \ref{lemmasupinf} above, introduce the sequence of
  stopping time, control and state process triples $(\gamma_n,\mathbf{w}^n,\bm{\xi}^n)_{n\ge-1}$
  defined recursively as follows:
  \begin{align*}
    &(\gamma_{-1},\mathbf{w}^{-1},\bm\xi^{-1})\equiv(\gamma_0,\mathbf{w}^0,\bm\xi^0)
    :=(\tau,\tilde{\mathbf{w}}\mathbf{1}_{A}+\bar{\mathbf{w}}^0\mathbf{1}_{A^c},\bm\xi^{{\mathbf{w}}^0,\tau,\bm{\xi}}),
  \end{align*}
  where $\bar{\mathbf{w}}^{0}$ is the control corresponding to the stochastic subsolution
  $v$ starting at the pair $(\tau,\bm{\xi})$, the event $A$ is given by
  \begin{align*}
    A=A(\bm\xi):=\{\bm\xi\in B(\xi_0,\varepsilon/2)\cap\Delta^{k+1}\text{ and }\varphi^\eta(\bm\xi)>v(\bm\xi)\}
  \end{align*}
  and for $n=0, 1, 2\dots$:
  \begin{enumerate}[label=\normalfont(\roman*)]
    \item if $v^\eta(\bm\xi^n_{\gamma_{n}})\le \bar f(\bm\xi^n_{\gamma_{n}})$ then
      we set
      \begin{align*}
        (\gamma_{n+1},\mathbf{w}^{n+1},\bm\xi^{n+1}):=(\gamma_{n},\mathbf{w}^{n},\bm\xi^{n}).
      \end{align*}
    \item if $A(\bm\xi^n_{\gamma_{n}})$ holds then we set
      \begin{align*}
        &\gamma_{n+1}:=\sigma(\gamma_n,\bm\xi^n_{\gamma_{n}},\mathbf{w}^{n}) \wedge\tau_1(\gamma_n,\bm\xi^n_{\gamma_{n}},\mathbf{w}^{n})\wedge\tau_*(\varphi^\eta;\gamma_n,\bm\xi^n_{\gamma_{n}},\mathbf{w}^{n}),\\
        &\mathbf{w}^{n+1}:=\tilde{\mathbf{w}},\quad\bm\xi^{n+1}:=\bm\xi^{\tilde{\mathbf{w}},\gamma_{n+1},\bm{\xi}^n_{\gamma_{n+1}}},
      \end{align*}
      where the $\mathcal{G}_t$-stopping time $\tau_1$ is defined by
      \begin{align*}
        \tau_1(\tau,\bm\xi,\mathbf{w}):=\inf\{s\ge \tau : \bm\xi^{\mathbf{w},\tau,\bm\xi}_s\in\partial B(\xi_0,\varepsilon/2)\},
      \end{align*}
      and $\tau_*$ is defined as in \eqref{tauexit}.
    \item otherwise we set
      \begin{align*}
        &\gamma_{n+1}:=\sigma(\gamma_n,\bm\xi^n_{\gamma_{n}},\mathbf{w}^{n}) \wedge\tau_*(v;\gamma_n,\bm\xi^n_{\gamma_{n}},\mathbf{w}^{n})\\
        &\bm\xi^{n+1}:=\bm\xi^{\mathbf{w}^{n+1},\gamma_{n+1},\bm{\xi}^n_{\gamma_{n+1}}},
      \end{align*}
      where $\mathbf{w}^{n+1}$ is the control process corresponding to the stochastic subsolution $v$
      starting at the pair $(\gamma_{n+1},\bm{\xi}^n_{\gamma_{n+1}})$.
  \end{enumerate}
  By construction we have that $\gamma_n\le\tau_*(v^\eta;\tau,\bm\xi,\mathbf{w})$ where
  the control $\mathbf{w}\in\interior(\mathcal{A}^\alpha_c)$ is defined as
  \begin{align*}
    \mathbf{w}_s :=\sum_{n=1}^{\infty}\mathbf{1}_{\{s\in[\gamma_n,\gamma_{n+1})\}}\mathbf{w}^n_s.
  \end{align*}
  Introduce the event
  \begin{align*}
    B:=\{\gamma_n<\tau_*(v^\eta;\tau,\bm\xi,\mathbf{w})\wedge\sigma(\tau,\bm\xi,\mathbf{w})\quad\text{for all }n\in\mathbb{N}\}
  \end{align*}
  and notice that for each $\omega\in B$ there exists $n_0(\omega)$ such that
  \begin{align}
    \label{ineq1}
    &\varphi^\eta(\bm\xi^{n_0+2l+1}_{\gamma_{n_0+2l+1}})\le \bar f(\bm\xi^{n_0+2l+1}_{\gamma_{n_0+2l+1}})\\
    &\qquad\text{  if } 
    \tau_*(\varphi^\eta;\gamma_{n_0+2l},\bm\xi^{n_0+2l}_{\gamma_{{n_0+2l}}},\mathbf{w}^{n_0+2l})<\tau_1(\gamma_{n_0+2l},\bm\xi^{n_0+2l}_{\gamma_{{n_0+2l}}},\mathbf{w}^{n_0+2l}),\notag\\
    \label{ineq2}
    &v^\eta(\bm\xi^{n_0+2l+1}_{\gamma_{n_0+2l+1}})= v(\bm\xi^{n_0+2l+1}_{\gamma_{n_0+2l+1}})\\
    &\qquad\text{  if } 
    \tau_*(\varphi^\eta;\gamma_{n_0+2l},\bm\xi^{n_0+2l}_{\gamma_{{n_0+2l}}},\mathbf{w}^{n_0+2l})\ge\tau_1(\gamma_{n_0+2l},\bm\xi^{n_0+2l}_{\gamma_{{n_0+2l}}},\mathbf{w}^{n_0+2l}),\notag\\
    \label{ineq3}
    &v(\bm\xi^{n_0+2l+1}_{\gamma_{n_0+2l+1}})\le \bar f(\bm\xi^{n_0+2l+1}_{\gamma_{n_0+2l+1}}),
  \end{align}
  for $l\ge0$. Denoting $\gamma_\infty:=\lim_n\gamma_n$ and noticing that $\bm\xi^{{\mathbf{w}},\tau,\bm{\xi}}_s=\bm\xi^n_{s}$
  for $s\in\nobreak[\gamma_n,\gamma_{n+1})$ we take the limit in \eqref{ineq3} to obtain
  \begin{align}\label{ineq4}
    &v(\bm\xi^{{\mathbf{w}},\tau,\bm{\xi}}_{\gamma_{\infty}})\le \bar f(\bm\xi^{{\mathbf{w}},\tau,\bm{\xi}}_{\gamma_\infty}).
  \end{align}
  Now assume there exists $C\subseteq B$ such that for each $\omega\in C$ we have
  \begin{align*}
    \varphi^\eta(\bm\xi^{{\mathbf{w}},\tau,\bm{\xi}}_{\gamma_{\infty}})> \bar f(\bm\xi^{{\mathbf{w}},\tau,\bm{\xi}}_{\gamma_\infty}),
  \end{align*}
  and conclude from \eqref{ineq1}-\eqref{ineq2} that there exists large enough positive
  integer $M(\omega)$ such that for all $n\ge M$ we have
  \begin{align*}
    v^\eta(\bm\xi^{n}_{\gamma_{n}})= v(\bm\xi^{n}_{\gamma_{n}}).
  \end{align*}
  By taking $n\to\infty$ above we get $v^\eta(\bm\xi^{{\mathbf{w}},\tau,\bm{\xi}}_{\gamma_{\infty}})= v(\bm\xi^{{\mathbf{w}},\tau,\bm{\xi}}_{\gamma_{\infty}})$
  on $C$. Hence, by using \eqref{ineq4} we see that 
  \begin{align*}
    &v^\eta(\bm\xi^{{\mathbf{w}},\tau,\bm{\xi}}_{\gamma_{\infty}})\le \bar f(\bm\xi^{{\mathbf{w}},\tau,\bm{\xi}}_{\gamma_\infty})
  \end{align*}
  on $C$. On the other hand, on $B\setminus C$ we have
  \begin{align*}
    \varphi^\eta(\bm\xi^{{\mathbf{w}},\tau,\bm{\xi}}_{\gamma_{\infty}})\le \bar f(\bm\xi^{{\mathbf{w}},\tau,\bm{\xi}}_{\gamma_\infty})
  \end{align*}
  and again from \eqref{ineq4} we get
  \begin{align*}
    &v^\eta(\bm\xi^{{\mathbf{w}},\tau,\bm{\xi}}_{\gamma_{\infty}})\le \bar f(\bm\xi^{{\mathbf{w}},\tau,\bm{\xi}}_{\gamma_\infty})
  \end{align*}
  on $B\setminus C$. It follows that $\gamma_\infty\ge\tau_*(v^\eta;\tau,\bm\xi,\mathbf{w})$
  on $B$ and from the definition of $B$ we conclude that $\gamma_\infty=\tau_*(v^\eta;\tau,\bm\xi,\mathbf{w})\wedge\sigma(\tau,\bm\xi,\mathbf{w})$.

  Now take any $\rho\in\mathcal{T}$ with $\rho\in[\tau,\sigma(\tau,\bm\xi,\mathbf{w})]$,
  let $\rho\wedge\gamma_n=\rho_n$ and notice that, by It\^o's formula applied to $\varphi^\eta$
  and the subsolution property of $v$, we have
  \begin{align*}
    &v^\eta(\bm\xi^n_{\rho_{n}})=(\mathbf{1}_{A}\varphi^\eta+\mathbf{1}_{A^c}v)(\bm\xi^n_{\rho_{n}})\\
    &\le\mathbb{E}[(\mathbf{1}_{A(\bm\xi^n_{\rho_{n}})}\varphi^\eta+\mathbf{1}_{A(\bm\xi^n_{\rho_{n}})^c}v)(\bm\xi^{n+1}_{\rho_{n+1}})|\mathcal{G}_{\rho_n}]
    \le\mathbb{E}[v^\eta(\bm\xi^{n+1}_{\rho_{n+1}})|\mathcal{G}_{\rho_n}],
  \end{align*}
  and by iterating the above we conclude that
  \begin{align}\label{localized2}
    &v(\bm\xi)\le\mathbb{E}[v(\bm\xi^{n+1}_{\rho_{n+1}})|\mathcal{G}_{\tau}]=\mathbb{E}[\bm\xi^{\mathbf{w},\tau,\bm\xi}_{\rho_{n+1}})|\mathcal{G}_{\tau}].
  \end{align}
  By taking $n\to\infty$ in \eqref{localized2} and using the bounded convergence
  theorem we obtain that $v^\eta$ satisfies item (ii) in Definition \ref{defstochsubsol}
  Hence $v^\eta\in{\mathcal{V}}^-$ and we obtain contradiction and consequently
  the supersolution property of $v^-$.
\end{proof}

\begin{assumption}\label{assconc} 
  The boundary function $g$ is the concave envelope of $\bar f$ on the simplex faces
  $\{\mathbf{z}\in\Delta^{k+1}: z_j=0\}$ for all $j=0,\dots,k+1$.
\end{assumption}
\begin{proposition}\label{viscosityexactbound}
  Under Assumption \ref{assconc} we have that $v^-=v^+=g$ on $\partial\Delta^{k+1}$.
\end{proposition}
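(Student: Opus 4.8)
The plan is to deduce everything from the single natural barrier $\conc(\bar f):\Delta^{k+1}\to\mathbb{R}$, the concave envelope of $\bar f$ on $\Delta^{k+1}$. Every $v\in\mathcal{V}^-$ satisfies $v\le g$ on $\partial\Delta^{k+1}$ by \eqref{subsolcond} and every $v\in\mathcal{V}^+$ satisfies $v\ge g$ on $\partial\Delta^{k+1}$ by \eqref{supsolcond}, so $v^-\le g\le v^+$ on $\partial\Delta^{k+1}$ is automatic and it remains to prove the reverse inequalities; I will get both from the claim that $\conc(\bar f)$ lies in $\mathcal{V}^-\cap\mathcal{V}^+$ and coincides with $g$ on $\partial\Delta^{k+1}$. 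The boundary identity is elementary: for a facet $F_j=\{\mathbf{z}\in\Delta^{k+1}:z_j=0\}$, any representation $\mathbf{z}=\sum_i\lambda_i\mathbf{y}_i$ of a point $\mathbf{z}\in F_j$ with $\lambda_i>0$, $\sum_i\lambda_i=1$, $\mathbf{y}_i\in\Delta^{k+1}$, has $\sum_i\lambda_i(\mathbf{y}_i)_j=z_j=0$ with $(\mathbf{y}_i)_j\ge0$, hence every $\mathbf{y}_i\in F_j$; therefore the restriction of $\conc(\bar f)$ to $F_j$ is the concave envelope of $\bar f|_{F_j}$ taken inside $F_j$, which by Assumption~\ref{assconc} equals $g|_{F_j}$. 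Since $\partial\Delta^{k+1}=\bigcup_j F_j$, this gives $\conc(\bar f)=g$ on $\partial\Delta^{k+1}$. Moreover $\conc(\bar f)$ is bounded (between $\inf\bar f$ and $\sup\bar f$) and continuous on $\Delta^{k+1}$, being the concave envelope of a bounded continuous function on a simplex.

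For the supersolution bound I would check $\conc(\bar f)\in\mathcal{V}^+$. Item (i) of Definition~\ref{defstochsupsol} holds by the first paragraph (with equality in \eqref{supsolcond}). For item (ii), fix $\tau$, $\bm{\xi}$ and any control $\mathbf{w}\in\interior(\mathcal{A}^\alpha_c)$: since $\conc(\bar f)$ is concave, It\^o's formula (applied to smooth concave approximations if one prefers) shows that $\conc(\bar f)(\bm{\xi}^{\mathbf{w},\tau,\bm{\xi}}_t)$ has drift $\frac{1}{2}\tr(\mathbf{w}_t\mathbf{w}_t'D_{\bm{\xi}}^2\conc(\bar f))\le0$ and hence is a local supermartingale; being bounded it is a genuine supermartingale, so \eqref{supermartingale} holds for every $\rho\in[\tau,\sigma(\tau,\bm{\xi},\mathbf{w})]$. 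Therefore $\conc(\bar f)\in\mathcal{V}^+$, so $v^+\le\conc(\bar f)=g$ on $\partial\Delta^{k+1}$, and with $v^+\ge g$ already in hand we obtain $v^+=g$ on $\partial\Delta^{k+1}$.

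For the subsolution bound I would check $\conc(\bar f)\in\mathcal{V}^-$, which is the delicate step. Item (i) of Definition~\ref{defstochsubsol} is immediate. For item (ii), given $\tau$ and $\bm{\xi}$ I must produce one control $\mathbf{w}\in\interior(\mathcal{A}^\alpha)$ along which $\conc(\bar f)$ is a submartingale up to the obstacle time $\tau_*(\conc(\bar f))$ of \eqref{tauexit}. The geometric fact behind this is that whenever $\conc(\bar f)(\xi)>\bar f(\xi)$ the function $\conc(\bar f)$ is affine along some non-degenerate segment of $\Delta^{k+1}$ through $\xi$: an optimal representation $\xi=\sum_i\lambda_i^* y_i^*$ realizing the envelope at $\xi$ (which exists by compactness) is non-trivial, and $\conc(\bar f)$ is affine on the simplex spanned by the $y_i^*$, coinciding there with an affine function that touches $\bar f$ at each $y_i^*$; moreover, by the facet argument of the first paragraph applied to the minimal face of $\xi$, the $y_i^*$ — hence the flat segment — lie in that face, so a direction $\mathbf{w}^*(\xi)$ along it, rescaled into the open disc, respects the constraint $\xi^n\in\{0,1\}\Rightarrow w^n=0$. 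Taking the feedback control $\mathbf{w}_t=\mathbf{w}^*(\bm{\xi}^{\mathbf{w},\tau,\bm{\xi}}_t)$, the drift of $\conc(\bar f)(\bm{\xi}^{\mathbf{w},\tau,\bm{\xi}}_t)$ vanishes for $t\le\tau_*(\conc(\bar f))$, so this process is a bounded local — thus true — martingale on $[\tau,\rho\wedge\tau_*(\conc(\bar f))]$, which is exactly \eqref{submartingale}. Hence $\conc(\bar f)\in\mathcal{V}^-$, so $v^-\ge\conc(\bar f)=g$ on $\partial\Delta^{k+1}$; with $v^-\le g$ there, this yields $v^-=v^+=g$ on $\partial\Delta^{k+1}$.

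The main obstacle is making the third paragraph fully rigorous: the flat direction $\mathbf{w}^*(\xi)$ must be selected progressively measurably, the state equation \eqref{statesde} must be solved for this merely measurable coefficient, the variation of $\mathbf{w}^*$ as $\bm{\xi}$ migrates between faces of $\Delta^{k+1}$ must be handled, and It\^o's formula must be justified for the non-$C^2$ function $\conc(\bar f)$. A robust way around this is to prove the proposition first for piecewise-affine $f$, where $\conc(\bar f)$ is piecewise affine with locally constant flat directions and the control is explicit, and then pass to the limit using continuity of the relevant quantities in $f$. Alternatively, the inequality $v^-\ge g$ on $\partial\Delta^{k+1}$ can be obtained from the recursive structure by induction on $k$: for $\xi_0$ lying in a proper face whose vertices are indexed by $\alpha'$ (so $|\alpha'|\le k$), the inductive hypothesis gives $v^-_{\alpha'}(\xi_0)=V_{\alpha'}(\xi_0)=g(\xi_0)$, and since a state process started on $\partial\Delta^{k+1}$ never leaves that face, a near-optimal stochastic subsolution of the $\alpha'$-problem can be promoted to a stochastic subsolution of the $\alpha$-problem with essentially the same value at $\xi_0$.
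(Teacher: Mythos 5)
Your first two paragraphs reproduce the paper's argument for the upper envelope: the concave envelope $\conc(\bar f)$ of $\bar f$ on $\Delta^{k+1}$ is a stochastic supersolution (the paper uses the It\^o--Tanaka formula where you propose smooth concave approximations, which is an equivalent device), it equals $g$ on $\partial\Delta^{k+1}$ under Assumption~\ref{assconc}, and hence $v^+\le\conc(\bar f)=g\le v^+$ there. Your explicit facet argument showing that the global envelope restricted to a face is the face envelope is a detail the paper leaves implicit, and it is correct.

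The lower bound is where you have a genuine gap, and you concede it yourself: the claim $\conc(\bar f)\in\mathcal{V}^-$ is never actually established. The geometric fact (a nondegenerate flat segment of $\conc(\bar f)$ through any point where $\conc(\bar f)>\bar f$, lying in the minimal face) is fine, but turning it into an admissible control requires a progressively measurable selection $\xi\mapsto\mathbf{w}^*(\xi)$ of flat directions, existence of a solution to \eqref{statesde} for this merely measurable feedback, a justification of the vanishing drift for the non-$C^2$ function $\conc(\bar f)$ along a state-dependent direction, and control of what happens when the process reaches a lower-dimensional face --- none of which is done, and Definition~\ref{defstochsubsol} demands an actual control, not a formal one. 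The two repairs you sketch are also not carried out: the piecewise-affine approximation would still need a stability argument for $\mathcal{V}^-$ (or for $v^-$) under uniform convergence of $f$, and the ``promotion'' in the inductive alternative is undefined --- a stochastic subsolution must be a continuous function on all of $\Delta^{k+1}$ satisfying \eqref{subsolcond} on the whole boundary and the submartingale property \eqref{submartingale} from arbitrary starting points, so extending a near-optimal subsolution of the $\alpha'$-face problem off its face is exactly the nontrivial step. The paper avoids all of this with a much lighter construction: fix one constant control $\mathbf{w}\in\interior(\mathcal{A}^\alpha_c)$ and let $\underline v$ be the value of the resulting pure optimal stopping problem \eqref{optimalstopping}. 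Then $\underline v$ is continuous by standard optimal stopping theory, $\underline v=g$ on $\partial\Delta^{k+1}$ because $\sigma=0$ there, and $\underline v\in\mathcal{V}^-$ because the value process stopped at $\sigma\wedge\tau_*(\underline v)$ is a martingale, giving \eqref{submartingale} with equality; hence $g=\underline v\le v^-\le g$ on the boundary. Replacing your third paragraph with this one-subsolution argument closes the gap; as written, the second half of your proof is a program rather than a proof.
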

\begin{proof}
  Let $\overline v$ be the concave envelope of $\bar f$ on the whole of $\Delta^{k+1}$.
  From Assumption \ref{assconc} it follows that $\overline v=g$ on $\partial\Delta^{k+1}$
  and $\overline v$ satisfies item (i) of Definition \ref{defstochsupsol}.
  Now take any $\tau\in\mathcal{T}$, $\bm{\xi}\in\mathcal{G}_\tau$ with $\mathbb{P}(\bm{\xi}\in\Delta^{k+1})=1$,
  $\mathbf{w}\in\interior(\mathcal{A}^\alpha_c)$ and $\rho\in\mathcal{T}$ with $\rho\in[\tau,\sigma(\tau,\bm\xi,\mathbf{w})]$,
  and notice that, by the It\^o-Tanaka formula (see e.g. Theorem VI.1.5 in \cite{RevYor})
  applied to the concave function $\overline v$ we have
  \begin{align*}
    \mathbb{E}[\overline v(\bm\xi^{\mathbf{w},\tau,\bm{\xi}}_{\rho})|\mathcal{G}_\tau]=\mathbb{E}[\overline v(\bm\xi)
    +\int_\tau^\rho\overline v'(\bm\xi^{\mathbf{w},\tau,\bm{\xi}}_{s})d\bm\xi^{\mathbf{w},\tau,\bm{\xi}}_{s}
    +\int_{\Delta^{k+1}}L_\rho^a\,\overline v''(da)|\mathcal{G}_\tau]\le \overline v(\bm{\xi}),
  \end{align*}
  where $\overline v'$ is the left derivative, the second derivative $\overline v''$ is
  understood in the sense of a negative measure and $L^a$ is the local time at $a$ of the
  process $\bm\xi^{\mathbf{w},\tau,\bm{\xi}}$. Hence, item (ii) of Definition \ref{defstochsupsol}
  is also satisfied and $\overline v$ is a stochastic supersolution. Since $v^+$
  satisfies \eqref{supsolcond} and $v^+\le\overline v$ it follows that $v^+=g$ on
  $\partial\Delta^{k+1}$.

  Fix a constant control $\mathbf{w}\in\interior(\mathcal{A}^\alpha_c)$ and define the function
  $\underline v:\Delta^{k+1}\to\mathbb{R}$ by
  \begin{align}\label{optimalstopping}
    \underline v(\xi^\alpha)&=\sup_{\bar\tau\in\mathcal{T}}\mathbb{E}\left[\tilde V_{k-1}(\xi^{\mathbf{w},\xi^\alpha}_{\sigma})I_{\{\sigma\le\bar\tau\}}+\bar f(\bm{\xi}^{\mathbf{w},\xi^\alpha}_{\bar\tau})I_{\{\sigma>\bar\tau\}} \right].
  \end{align}
  The continuity of $\underline v(\xi^\alpha)$ follows from the boundedness of
  the control $\mathbf{w}$ and standard results on optimal stopping problems (see
  e.g. Theorem 3.1.5 in \cite{Kry}). We have that $\underline v(\xi^\alpha)=V_{\alpha'}(\xi^{\alpha'})=g(\xi^\alpha)$
  for $\xi^\alpha\in\partial\Delta^{k+1}$ and we obtain that item (i) of Definition
  \ref{defstochsubsol} is satisfied. Moreover, the optimal stopping time in \eqref{optimalstopping}
  exists and is equal to $\tau^*=\sigma\wedge\tau_*(\underline v;0,\xi^\alpha,\mathbf{w})$
  and it follows that $\underline v(\bm{\xi}^{\mathbf{w},\xi^\alpha}_{t\wedge\tau^*})$ is a
  martingale (see e.g. Theorems I.2.4 and I.2.7 in \cite{PesShi}). This means that
  \eqref{submartingale} is satisfied with equality and $\underline v$ is a stochastic
  subsolution. By definition we know that $v^-\le g$ on $\partial\Delta^{k+1}$ and $\underline v\le v^-$.
  Hence, we conclude that $v^-=g$ on $\partial\Delta^{k+1}$.
\end{proof}

\begin{proof}[Proof of Theorem \ref{maintheorem}] 
  It is clear that if $|\alpha|=1$ then $ V_\alpha(\xi^\alpha)=\bar f(\xi^\alpha)$ where
  $\xi=\delta_{x_i}$ for some $i$ and $\xi^\alpha=1$. We continue by induction and
  assume that we have proven the statement for all $k'<k$. By the induction hypothesis
  $V_{\alpha'}(\xi^{\alpha'})$ is the concave envelope of $\bar f$ on the corresponding
  to $\alpha'$ simplex face and hence Assumption \ref{assconc} is satisfied. Moreover,
  value functions coincide on the intersection of their corresponding simplex
  faces, and therefore Assumpton \ref{asscont} is also satisfied. Define the Hamiltonian
  $H$ as
  \begin{align*}
    H(A):=-\sup_{\mathbf{w}\in\mathbb{D}^{k+1}_c}\frac{1}{2}\tr(\mathbf{w}\mathbf{w}'A)\quad\text{for}\quad A\in\mathbb{R}^{(k+1)\times (k+1)},
  \end{align*}
  and notice that for small enough $c$ the set $\mathbb{D}^{k+1}_c$ contains all directions in $\mathbb{R}^k$.
  On the other hand, $V_\alpha$ is a viscosity solution to \eqref{viscosity} on $\interior(\Delta^{k+1})$ if
  and only if the projected function $\tilde V_\alpha$ defined in \eqref{projdef1} is a viscosity solution of
  \begin{align}\label{viscosityprojected}
    \min\Big\{-\sup_{\mathbf{w}\in\tilde{\mathbb{D}}^{k}_c}\frac{1}{2}\tr(\mathbf{w}\mathbf{w}'D_{\bm{\xi}}^2\tilde V_\alpha), \tilde V_\alpha-\tilde f\Big\}=0
  \end{align}
  on $\interior(\tilde\Delta^{k})$, where $\tilde{\mathbb{D}}^{k}_c$ is the projection of $\mathbb{D}^{k+1}_c$ onto $\mathbb{R}^{k}$.
  Hence, the function $V_\alpha$ is a viscosity solution to $H(D_{\bm{\xi}}^2V_\alpha)\ge0$ if and only if $\tilde V_\alpha$ is a viscosity
  solution to $-\lambda_k[\tilde V_\alpha]\ge0$, where $\lambda_k[\tilde V_\alpha]$ is the largest eigenvalue of the Hessian $D_{\bm{\xi}}^2\tilde V_\alpha$.
  Therefore we can apply Theorem 1 in \cite{Obe1} to obtain that any continuous viscosity solution
  to \eqref{viscosityprojected} is concave. Moreover, uniqueness of the solution to
  \eqref{viscosityprojected} together with the projected boundary condition
  \begin{align}\label{boundarycondprojected}
    \tilde V_\alpha(\xi^\alpha)&= \tilde V_{\alpha'}(\xi^{\alpha'}),
  \end{align}
  follows from the comparison principle for Dirichlet problems stated in Theorem 2.10 of \cite{Obe2}.
  This leads to uniqueness and comparison principle for our original problem \eqref{viscosity}-\eqref{boundarycond}.
  In particular, by Propositions \ref{viscositysupersub} and \ref{viscosityexactbound} we
  have that $v^+\le v^-$ on $\interior(\Delta^{k+1})$. On the other hand, by Proposition
  \ref{viscositysupersub} we also have $v^-\le V_\alpha\le v^+$ on $\Delta^{k+1}$.
  Therefore, we can conclude that $v^-=V_\alpha=v^+$ on $\Delta^{k+1}$ and $V_\alpha$ is
  the unique viscosity solution of \eqref{viscosity} with the boundary condition
  \eqref{boundarycond}, and the same is true for the projected versions.

  Finally, from Theorem 2 in \cite{Obe1} we have that the concave envelope of the projected cost
  function $\tilde f$ solves \eqref{viscosityprojected}, and since it also clearly
  satisfies \eqref{boundarycondprojected} we conclude from the uniqueness that $\tilde V_\alpha$
  is the concave envelope of $\tilde f$.
\end{proof}

\begin{remark}
  The value function $V_\alpha$ can be regarded as
  the concave envelope on the simplex $\Delta^{k+1}$ of the modified cost function $\bar f$.
  Indeed, we can ignore one direction in the state space
  vector $\bm{\xi}$ due to the fact that $\Delta^{k+1}$ is a k-dimensional simplex and any concave function
  on a k-dimensional simplex in $\mathbb{R}^{k+1}$ is concave in any $k$ of its variables (and vice versa).
  Note that the optimal control weight vector $\mathbf{w}^*$ may not be unique. It is
  determined by the direction on the simplex $\Delta^{k+1}$ for which the second directional
  derivative of the value function $V_\alpha$ is zero - if the value function is linear at a point
  then clearly many directions satisfy this condition.
\end{remark}

\begin{remark}
  When applying the stochastic Perron method to controlled exit time problems one
  needs a comparison result for the corresponding PDE in order to characterize the
  value function as a viscosity solution (see e.g. Definition 2 and Remark 1 in \cite{Rok}).
  These comparison results are of a slightly different nature than the standard ones
  of e.g. Theorems 7.9 and 8.2 in \cite{CraIshLio} - the latter require an apriori
  knowledge of the behaviour of the stochastic semisolutions at the boundary. We were
  able to exploit the specific structure of our exit time problem in Proposition
  \ref{viscosityexactbound} to obtain the behaviour at the boundary of the stochastic
  semisolutions. This allowed the application of the comparison result in \cite{Obe2}.
\end{remark}

\section{Examples}\label{sec:example}
Let us first provide some intuition behind the choice of optimal controls and
stopping times.
We will consider a general class of cost functions - namely all bounded, non-negative
Lipschitz continuous functions $f:\nobreak\mathbb{R}\to\mathbb{R}$. This is the class 
for which Theorem \ref{maintheorem} holds. We will use our concave envelope characterization
to choose the optimal controls and verify that Brownian exit times are optimal.

We abuse notation and regard $\bar f$ as a function on the
projected set of probability vectors $\tilde\Delta^{N}:=\{\mathbf{z}\in\mathbb{R}_{\ge0}^{N}: \sum z_i\le1\}$.
Denote by $\conc(\bar f)$ the concave envelope of $\bar f$ on $\tilde\Delta^{N}$.
For any initial probability vector $z\in\tilde\Delta^{N}$ corresponding to some terminal
law $\mu$, e.g.
\begin{align*}
  \mu=\sum_{i=1}^Nz_i\delta_{x_i}+(1-\sum_{i=1}^Nz_i)\delta_{x_0}, 
\end{align*}
we will find a candidate optimal control weight process $(\mathbf{w}_r)_{r\ge0}$ taking values in
the projected admissible set $\tilde{\mathbb{D}}^{N}_c$ (i.e. the projection of $\mathbb{D}^{N+1}_c$ onto $\mathbb{R}^{N}$)
and a candidate optimal stopping time $\tau_*$ such that the resulting value function will be $\conc(\bar f)$.

The usual characterization of optimal stopping times leads us to choose the candidate
$\tau_*$ as
\begin{align}\label{opttime}
  \tau_*:=\inf\{r\ge0 : \conc(\bar f)(\bm{\xi}^{\mathbf{w},z}_r)=\bar f(\bm{\xi}^{\mathbf{w},z}_r)\}.
\end{align}
In particular, if the initial probability vector $z$ is such that $\conc(\bar f)(z)=\bar f(z)$ we can simply set $\tau_*=\nobreak0$. 
Assume now that $\conc(\bar f)(z)>\bar f(z)$ and note that the point $(z,\conc(\bar f)(z))$
belongs to a planar region of the graph of $\conc(\bar f)(z)$ that contains a point
$(z^{(1)},\conc(\bar f)(z^{(1)}))$ such that $\conc(\bar f)(z^{(1)})=\bar f(z^{(1)})$.
In other words, all points on the line between $(z,\conc(\bar f)(z))$ and $(z^{(1)},\conc(\bar f)(z^{(1)}))$ are also part
of the graph of $\conc(\bar f)$. We choose the control weight process as a constant
vector in the direction of $z-z^{(1)}$, i.e. $\mathbf{w}_r\equiv c_1 (z-z^{(1)})$,
where the constant $c_1$ is such that $\mathbf{w}$ is admissible.
Therefore the probability
vector process $(\bm{\xi}^{\mathbf{w},z}_r)_{r\ge0}$ evolves along the direction $z-z^{(1)}$
and either hits the point $z^{(1)}$ or hits the boundary of $\tilde\Delta^{N}$ at
some point $z^{(2)}$. The point $z^{(2)}$ can be regarded as belonging to a lower
dimensional projected set $\tilde\Delta^{\bar N}:=\{\mathbf{z}\in\mathbb{R}_{\ge0}^{\bar N}: \sum z_i\le1\}$
where $\bar N<N$. If $\conc(\bar f)(z^{(2)})>\bar f(z^{(2)})$, we repeat the same
procedure when choosing a control on this lower dimensional set - clearly this can
happen at most $N$ times.

For simplicity's sake assume that $\conc(\bar f)(z^{(2)})=\bar f(z^{(2)})$. In
other words, by looking at \eqref{statesde} and \eqref{opttime}, we get that $\tau_*$
is the first exit time of a Brownian motion from the interval with endpoints $v_1=\frac{z^{(1)}_0-z_0}{c_1(z_0-z'_0)}$
and $v_2=\frac{z^{(2)}_0-z_0}{c_1(z_0-z'_0)}$.
Using the formula for the Brownian exit times from an interval we obtain that the
projected value function as defined in \eqref{projdef1} satisfies
\begin{align*}
  \tilde V_\alpha(z)=\frac{v_2}{v_2-v_1}\bar f(z^{(1)})+\frac{-v_1}{v_2-v_1}\bar f(z^{(2)})
\end{align*}
and the point $(z,\tilde V_\alpha(z))$ lies on the line going through $(z,\conc(\bar f)(z))$ and $(z',\conc(\bar f)(z'))$,
hence $\tilde V_\alpha(z)=\conc(\bar f)(z)$. Similar calculation is valid for the
case $\conc(\bar f)(z^{(2)})>\bar f(z^{(2)})$.

Finally, by application of the It\^o-Tanaka formula as in the proof of Proposition
\ref{viscosityexactbound} we conclude that $\conc(\bar f)$ bounds the value function
from above, and therefore the two coincide.

\begin{remark}[Generalized Put options]
  In fact, if the cost function is of the form
  \begin{align*}
    f(s)=(g(s))^+,
  \end{align*}
  for some concave function $g$, by direct calculation we can check that the candidate
  control and stopping time described above are optimal among those controls that follow
  a fixed direction and those stopping times that are Brownian exit times from an interval. 
  By applying Theorem \ref{maintheorem} we see that optimization over this class is sufficient.
\end{remark}

In what follows, using the observations above, we will construct the optimal controls and
stopping times explicitly for a piece-wise linear cost function which can be thought of
as a call option spread.

\subsection{Call option spread}
We let $f$ take the form
\begin{align*}
  f(s)=(s-K_1)^+-(s-K_2)^+
\end{align*}
for $K_1\in(-1,1)$, $K_2\in(0,1)$ and $K_1<K_2$, which can be seen as a bull call spread. Set $N=2$, $\mathbb{X}_N=\{-1,0,1\}$ and assume that the law of $M_T$ is
given by
\begin{align*}
  \mu = (1-\gamma-\beta)\delta_{-1}+\beta \delta_0 + \gamma \delta_1,
\end{align*}
for $0<\gamma,\beta<1$ such that $0<\gamma+\beta<1$.
Therefore, the initial probability vector is
\begin{align*}
  {\xi}^\alpha\equiv(\xi^{0}_0,\xi^{1}_0,\xi^{2}_0) = (1-\gamma-\beta,\beta,\gamma)\in\interior(\Delta^3)
\end{align*}
where $\alpha=\{0,1,2\}$. From the definition of the process $M$ in \eqref{defS} it follows that
\begin{align}\label{stateprocessM}
  M_t = \gamma_{T^{-1}_t} - (1-\gamma_{T^{-1}_t}-\beta_{T^{-1}_t})=2\gamma_{T^{-1}_t}+\beta_{T^{-1}_t} - 1\quad\text{for}\quad t\in[0,T],
\end{align}
where $\beta_r = \xi^1_r$ and $\gamma_r = \xi^2_r$ for $r\ge0$. 
We introduce the constants $s^{-101}=2\gamma+\beta - 1$, $s^{01}=\frac{\gamma}{\gamma+\beta}$,
$s^1=1$ and $s^0=0$ corresponding to the value of $M_0$ taking various atoms
of $\mathbb{X}_N$ into account.
We use the notation $V_\alpha(\beta,\gamma):=V_\alpha({\xi}^\alpha)$ and $\bar f(\beta,\gamma):=\bar f({\xi}^\alpha)$.

We will now describe how to obtain a guess for the value function which, as expected, will turn
out to be the concave envelope of the modified cost function $\bar f$.
Notice that $f$ is nondecreasing and achieves its maximum for any $s\ge K_2$ and its minimum
for any $s\le K_1$. Therefore, for the martingale state process $\bm{\xi}^{\mathbf{w},\xi^\alpha}$ (or equivalently the law process $\xi^{\mathbf{w},\xi^\alpha}$),
we want to offset any decrease of probability mass on the interval $(K_2,\infty)$ with
a corresponding decrease on the interval $(-\infty,K_1)$.
We consider the following cases:
\begin{enumerate}
  \item 
    Assume $M_0\equiv s^{-101}\ge K_2$. Then it is optimal to stop immediately, i.e. choose an
    optimal stopping time $\tau_*=0$ and obtain $V_\alpha(\beta,\gamma)=K_2-K_1$.
  \item
    Assume $s^{01}\ge K_2>s^{-101}$ and let the constant $\eta\in[0,1-\gamma-\beta)$ be such that $\frac{\gamma-\eta}{\gamma+\beta+\eta}=K_2$.  
    Then it is optimal to choose
    a stopping time $\tau_*$ and a control process $\mathbf{w}_r\equiv(w^0_r,w^1_r,w^2_r)=(-c_1-\nobreak\frac{\beta}{\gamma}c_1,\frac{\beta}{\gamma}c_1,c_1)$
    for any $r\in[0,\tau_*]$, where the constant $c_1>0$ is such that $\mathbf{w}$ is an admissible control and the optimal stopping
    time $\tau_*$ is the first exit time of $\gamma_r$ from the interval $(0,\frac{\gamma}{\gamma+\beta+\eta})$.
    Note that this choice of $\mathbf{w}$ is not unique.

    Equivalently, by using \eqref{stateprocessM}, we see that $\tau_*$ is the first exit time of $M_{T_r}$ from the interval $(-1,K_2)$.
    This corresponds to letting the law $\xi^{\mathbf{w},\xi^\alpha}$ evolve until
    the stopping time $\tau_*$ when it separates into two measures of the form
    \begin{align*}
      \xi_{\tau_*}^{\mathbf{w},\xi^\alpha}=\begin{cases}
        \frac{\gamma\delta_1+\beta\delta_0+\eta\delta_{-1}}{\gamma+\beta+\eta}&\text{ with probability }\gamma+\beta+\eta,\\
        \delta_{-1}&\text{ with probability }1-(\gamma+\beta+\eta).
      \end{cases}
    \end{align*}
    By the definition of $\eta$ we have that $\gamma+\beta+\eta=\frac{2\gamma+\beta}{K_2+1}$ and therefore $V_\alpha(\beta,\gamma)=\frac{2\gamma+\beta}{K_2+1}(K_2-K_1)$.
  \item
    Assume $K_2>s^{01}$ and let the constant $\eta\in(0,\beta)$ be such that $\frac{\gamma}{\gamma+\eta}=K_2$. 
    Then we choose a stopping time $R_1$ and a control process $\mathbf{w}_r\equiv(w^0_r,w^1_r,w^2_r)=(-c_1-\nobreak\frac{\eta-\beta(\gamma+\eta)}{\gamma-\gamma(\gamma+\eta)}c_1,\frac{\eta-\beta(\gamma+\eta)}{\gamma-\gamma(\gamma+\eta)}c_1,c_1)$
    for any $r\in[0,R_1]$, where the constant $c_1>0$ is such that $\mathbf{w}$ is an admissible control and the stopping
    time $R_1$ is the first exit time of $\gamma_r$ from the interval $(0,\frac{\gamma}{\gamma+\eta})$.
    Equivalently, by using \eqref{stateprocessM}, we see that $R_1$ is the first exit time of $M_{T_r}$ from the interval $\big(-\frac{1-\gamma-\beta}{1-\gamma-\eta},K_2\big)$.
    This corresponds to letting the law $\xi^{\mathbf{w},\xi^\alpha}$ evolve until
    time $R_1$ when it separates into two measures of the form
    \begin{align*}
      \xi_{R_1}^{\mathbf{w},\xi^\alpha}=\begin{cases}
        \frac{\gamma\delta_1+\eta\delta_0}{\gamma+\eta}&\text{ with probability }\gamma+\eta,\\
        \frac{(\beta-\eta)\delta_0+(1-\beta-\gamma)\delta_{-1}}{1-\gamma-\eta}&\text{ with probability }1-(\gamma+\eta).
      \end{cases}
    \end{align*}

    In addition, if $s^0\le K_1$, we choose the optimal stopping time as $\tau_*\equiv R_1$
    and we have $V_\alpha(\beta,\gamma)=\frac{\gamma}{K_2}(K_2-K_1)$. This is due to
    the fact that if $\gamma_{R_1}=0$ (i.e. the atom $\{ 1\}$ dies) it is not worth to evolve the law $\xi^{\mathbf{w},\xi^\alpha}$ further because the
    cost function $f$ will be $0$ under any combination of the atoms $\{0,-1\}$. In other words
    we gain nothing from transferring probability mass between the atoms $0$ and $-1$.

    On the other hand, if we also have that $s^0>K_1$, on the event $A:=\{\gamma_{R_1}=0\}$ we let the control process
    be $\mathbf{w}_r=(-w^1_{R_1},w^1_{R_1},0)$ for $r\in(R_1,R_2]$ and set the optimal stopping time
    \begin{align*}
      \tau_*=R_1\mathbf{1}_{A^c}+R_2\mathbf{1}_{A},
    \end{align*}
    where the stopping time $R_2$ is the first exit time of $\beta_u$ from the interval $(0,1)$ for $u>R_1$.
    Equivalently, by using \eqref{stateprocessM}, we see that $R_2$ is the first exit time of $M_{T_r}$ from
    the interval $\big(-1,0\big)$ for $r> R_1$.
    This corresponds to further evolving the law $\xi^{\mathbf{w},\xi^\alpha}$ until at
    the stopping time $R_2>R_1$ it splits into three measures of the form
    \begin{align*}
      \xi_{R_2}^{\mathbf{w},\xi^\alpha}=\begin{cases}
        \frac{\gamma\delta_1+\eta\delta_0}{\gamma+\eta}&\text{ with probability }\gamma+\eta,\\
        \delta_0&\text{ with probability }\beta-\eta,\\
        \delta_{-1}&\text{ with probability }1-\beta-\gamma.
      \end{cases}
    \end{align*}
    Therefore we have
    \begin{align*}
      V_\alpha(\beta,\gamma)=\frac{\gamma}{K_2}(K_2-K_1)+(\beta-\eta)(-K_1)=\gamma(1-K_1)-\beta K_1.
    \end{align*}
\end{enumerate}
\begin{figure}[!htb]
  \centering
  \scalebox{0.60}{\includegraphics[trim={3cm 0 0 0},clip]{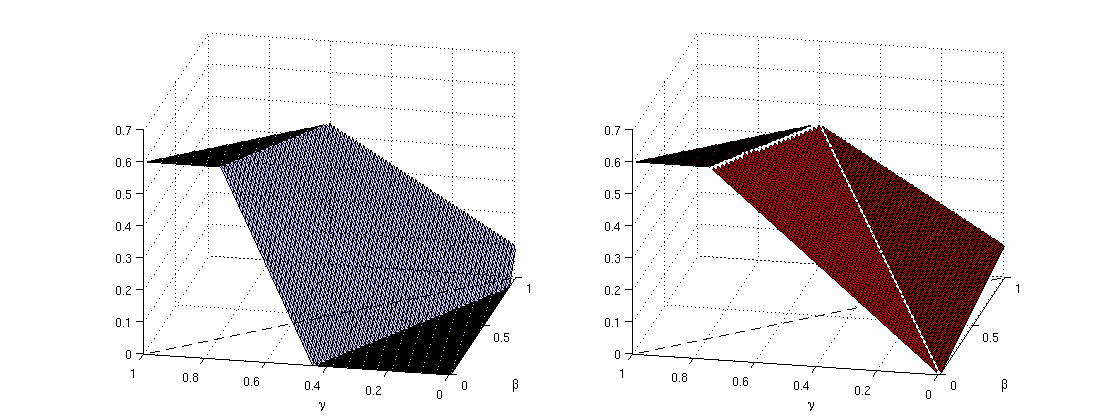}}
  \caption{The modified cost function $\bar f(\beta,\gamma)$ on the left plotted together with the projected value function $V_\alpha(\beta,\gamma)$ on the right for $K_1=\nobreak-0.1$ and $K_2=0.5$.
  The three triangular planar regions correspond to the three cases above. It is evident that $V_\alpha(\beta,\gamma)$ is the concave envelope of $\bar f(\beta,\gamma)$.}\label{fig1}
\end{figure}
\FloatBarrier

The candidate value function $V_\alpha(\beta,\gamma)$ is given by
\begin{align*}
  V_\alpha(\beta,\gamma)=\begin{cases}
    K_2-K_1&\textnormal{(i)}\quad s^{-101}\ge K_2\\
    \frac{2\gamma+\beta}{K_2+1}(K_2-K_1)&\textnormal{(ii)}\quad s^{01}\ge K_2>s^{-101}\\
    \frac{\gamma}{K_2}(K_2-K_1)&\textnormal{(iii)}\quad  K_2>s^{01},s^0\le K_1\\
    \gamma(1-K_1)-\beta K_1&\textnormal{(iv)}\quad  K_2>s^{01},s^0> K_1
  \end{cases}
\end{align*}
and it is the concave envelope of $\bar f(\beta,\gamma)$ (see Figure \ref{fig1}). \footnote{It turns out that the value function in this example is the same as in the Asian option setting of \cite{CoxKal}; see the example in Section 4.2 therein. This is because under their optimal model the stock price is a fixed random variable which is given by the average of our measure valued martingale at $\tau^*$ using \eqref{defS}.}

\appendix
\section{Proof of Lemma \ref{lemmacont}}
\begin{proof}\let\qed\relax
  In order to prove the independence in the $t$ variable we choose $0\le t_1<t_2< T$ and notice that $U(t_1,\xi)\ge U(t_2,\xi)$.
  Indeed, the supremum in \eqref{markovprob} corresponding to $U(t_1,\xi)$ is taken
  over a larger set of stopping times than the one corresponding to $U(t_2,\xi)$.
  Conversely, for any $\xi\in\Xi$ and $\tau\in\mathcal{T}_{t_1}$
  we can choose $\tilde\xi\in\Xi$ and $\tilde\tau\in\tilde{\mathcal{T}_{t_2}}$
  such that 
  \begin{align*}
    \tilde\tau=a\tau+b,\qquad \tilde\xi_{at+b}=\xi_{t}
  \end{align*}
  with $a=\frac{T-t_2}{T-t_1}$ and $b=\frac{T(t_2-t_1)}{T-t_1}$. This choice leads
  to 
  \begin{align*}
    \int x\, \xi_{\tau}(dx)=\int x\, \tilde\xi_{{\tilde \tau}}(dx)
  \end{align*}
  which allows us to conclude that $U(t_2,\xi)\ge U(t_1,\xi)$ and hence
  $U(t_2,\xi)= U(t_1,\xi)$ and we have independence in $t$ for $t\in[0,T)$.
  
    To prove the continuity in $\xi$ we first observe (e.g. see Lemma 3.1 in \cite{CoxKal}) that if $(\xi_r)_{r\ge0}\in\Xi$ with
  $\xi_t=\xi$ and $d_{\mathcal{W}_1}(\xi_t,\xi')<\varepsilon$ (here $d_{\mathcal{W}_1}$ is the Wasserstein-1 metric)
  then there is $(\xi'_r)_{r\ge0}\in\Xi$ with $\xi'_t=\xi'$ such that $\mathbb{E}[|\int x\,\xi_\tau(dx)-\int x\,\xi'_\tau(dx)||\mathcal{F}_t]<\varepsilon$
  for all $\tau\in \mathcal{T}_t$ with some fixed $\lambda\in\Lambda$.
  Indeed, we know that $\xi_s=\mathbb{E}[\xi_T|\mathcal{F}_s]$ 
  and we can define
  \begin{align*}
    \xi'_s(dy)=\mathbb{E}\left[\int \xi_T(dx)m(x,dy)|\mathcal{F}_s\right],\quad s\ge t,
  \end{align*}
  where the Borel family of probability measures $m(x,dy)$ is obtained by the disintegration
  of the transport plan $\Gamma(dx,dy)=\xi_t(dx)m(x,dy)$ such that $\Gamma(\mathbb{R}_+,dy)=\xi'(dy)$,
  $\Gamma(dx,\mathbb{R}_+)=\xi_t(dx)$ and $\int \int |x-y|\Gamma(dx,dy)<\varepsilon$.
  By optional stopping we get
  \begin{align*}
    \left|\int x\,\xi_\tau(dx)-\int x\,\xi'_\tau(dx)\right|\le \mathbb{E}\left[\int\int|x-y|\xi_T(dx)m(x,dy)|\mathcal{F}_\tau\right]
  \end{align*}
  and hence
  \begin{align*}
    \mathbb{E}\left[\left|\int x\,\xi_\tau(dx)-\int x\,\xi'_\tau(dx)\right|\Big|\mathcal{F}_t\right]\le \int \int |x-y|\Gamma(dx,dy)<\varepsilon.
  \end{align*}
  Denote by $M^{\xi}$ the process corresponding to the measure-valued martingale $(\xi_r)_{r\ge0}$
  from \eqref{defS}. By the Lipschitz property of $f$ and the above inequality we get
  \begin{align*}
    \mathbb{E}\left[\left|f\big(M^{\xi'}_\tau\big)-f\big(M^{\xi}_\tau\big)\right||\mathcal{F}_t\right]<\varepsilon.
  \end{align*}
  Now fix $\varepsilon'>0$ and consider $ \xi,\xi'\in\mathcal{P}$ such that
  $d_{\mathcal{W}_1}(\xi,\xi')<\varepsilon'/2$. From the reasoning above, we can
  choose $(\xi_r)_{r\ge0},(\xi'_r)_{r\ge0}\in\Xi$ with $\xi_t=\xi$ and $\xi'_t=\xi'$ such that
  $U(t,\xi)\le\sup_{\tau\in\mathcal{T}_t} \mathbb{E}[f\big(M^{\xi}_\tau\big)|\mathcal{F}_t]+\varepsilon'/2$
  and $\mathbb{E}\left[\left|f\big(M^{\xi'}_\tau\big)-f\left(M^{\xi}_\tau\right)\right||\mathcal{F}_t\right]<\varepsilon'/2$. 
  Therefore we obtain
  \begin{align*}
    U(t,\xi)\le\sup_{\tau\in\mathcal{T}_t} \mathbb{E}[f(M^{\xi}_\tau)|\mathcal{F}_t]+\varepsilon'/2\le\sup_{\tau\in\mathcal{T}_t} \mathbb{E}[f(M^{\xi'}_\tau)|\mathcal{F}_t]+\varepsilon'\le U(t,\xi')+\varepsilon',
  \end{align*}
  and by symmetry we get $|U(t,\xi)-U(t,\xi')|\le \varepsilon'$ and continuity follows.
\end{proof}

\bibliography{modelindependent}
\bibliographystyle{plain}

\end{document}